\theoremstyle{definition}
\newtheorem{definition}{Definition}[section]
\newtheorem{theorem}[definition]{Theorem}
\newtheorem*{theorem*}{Conjecture}
\newtheorem{lemma}[definition]{Lemma}
\newtheorem{corollary}[definition]{Corollary}
\theoremstyle{remark}
\newtheorem{remark}[definition]{Remark}
\newtheorem{example}[definition]{Example}
\newcounter{enumctr}
\newcommand{\R}{\mathbb{R}}
\newcommand{\N}{\mathbb{N}}
\newcommand{\C}{\mathbb{C}}
\begin{document}
	\title {A generalized fractional Halanay inequality and its applications}         
	\author{La Van Thinh\footnote{ lavanthinh@hvtc.edu.vn, Academy of Finance, No. 58, Le Van Hien St., Duc Thang Wrd., Bac Tu Liem Dist., Hanoi, Viet Nam}, 
  Hoang The Tuan\footnote{Institute of Mathematics, Vietnam Academy of Science and Technology, 18 Hoang Quoc Viet, 10307 Ha Noi, Viet Nam}}
	\date{}
	\maketitle
\begin{abstract}
This paper is concerned with a generalized Halanay inequality and its applications to fractional-order delay linear systems. First, based on a sub-semigroup property of Mittag-Leffler functions, a generalized Halanay inequality is established. Then, applying this result to fractional-order delay systems with an order-preserving structure, an optimal estimate for the solutions is given. Next, inspired by the obtained Halanay inequality, a linear matrix inequality is designed to derive the Mittag-Leffler stability of general fractional-order delay linear systems. Finally, numerical examples are provided to illustrate the proposed theoretical results.
\end{abstract}
\begin{keywords}
Fractional-order delay linear systems, Mittag-Leffler stability, Generalized fractional Halanay inequality, Positive systems, Linear matrix inequality
\end{keywords}

{\bf AMS subject classifications:}  34A08, 34K37, 45A05, 45M05, 45M20, 33E12, 26D10, 15A39

\section{Introduction}
Fractional delay differential equations is an important class that has many applications in practical problems of fractional differential equations. To our knowledge, the following approaches are commonly used to study the asymptotic behavior of the solutions of these equations: I. Spectrum analysis method; II. Lyapunov--Razumikhin method; III. Comparison method. 

Regarding the spectrum analysis method, interested readers can refer to \cite{Kaslik, Cermak, Phat, TuanIEEE, TuanFCAA}. The drawback of this approach is that it leads to solving complex fractions of fractional orders containing delays and thus requires many tools from complex analysis.

One of the first attempts at formulating a Razumikhin-type theorem for delay fractional differential equations was \cite{Chen}. Recently, this approach has been improved in \cite{Jin, Zhang}. However, the lack of an effective Leibniz rule for fractional derivatives significantly reduces the validity of these results.

Comparison arguments were used very early in fractional calculus, see e.g., \cite{Lak}. They seem to be particularly suitable for positive delay systems \cite{Shen, GCM_20, Jia, BS_22, Tuan-Thinh_23, Thinh-Tuan}.

Halanay inequality is a comparison principle for delay differential equations \cite{Halanay}. In \cite{Wang_15}, the first fractional version of this inequality was established to prove the stability and the dissipativity of fractional-order delay systems. Later, an extended version of \cite[Lemma 2.3]{Wang_15} was proposed in \cite{NNThang} to investigate the finite-time stability of nonlinear fractional order delay systems while other results have been developed in \cite{Tatar, Ke1} (for the case with distributed delays), and in \cite{Ke2} (for the case with unbounded delays).

Motivated by the above discussions, in light of a sub-semigroup property of classical Mittag-Leffler functions, we propose a generalized fractional Halanay inequality which improves and generalizes the existing works \cite[Lemma 2.3]{Wang_15} and \cite[Theorem 1.2]{NNThang}. Then, the obtained inequality is applied to investigate the Mittag-Leffler stability of fractional-order delay systems in both cases: the systems with or without a structure that preserves the order of solutions.

The rest of this paper is organized as follows. In section 2, some preliminaries and a fractional Hanlanay inequality are provided. In section 3, by combining the established fractional Halanay inequality with the property of preserving the order of the solutions, we present a new optimal estimate to characterize the asymptotic stability of fractional-order positive delay linear systems. Next, we consider general fractional-order delay linear systems. With the help of the Halanay-type inequality, a linear matrix inequality is designed to ensure the Mittag-Leffler stability of these systems. In section 4, several numerical examples are presented to illustrate the validity of the theoretical results.

We close this section by introducing some symbols and definitions that will be used throughout the article. Let $\N,\, \R, \, \R_{\ge0},\,\R_+,\;\R_{\le 0}$, $\mathbb C$ be the set of natural numbers, real numbers, nonnegative real numbers, positive real numbers, nonpositive real numbers, and complex numbers, respectively. Let  $d\in\N$ and $\R^d$ stands for the $d$-dimensional real Euclidean space. Denote by $\R^d_{\geq 0}$ the set of all vectors in $\R^d$ with
nonnegative entries, that is, $$\R_{\geq 0}^d=\left\{y=(y_1,...,y_d)^{\rm T}\in \R^d:y_i\ge0,\ 1\le i\le d\right\},$$
$\R^d_{+}$ the set of all vectors in $\R^d$ with
positive entries, that is, $$\R_{+}^d=\left\{y=(y_1,...,y_d)^{\rm T}\in \R^d:y_i>0,\ 1\le i\le d\right\},$$
and $\R^d_{\leq 0}$ the set of all vectors in $\R^d$ with
nonpositive entries, that is, $$\R_{\leq 0}^d=\left\{y=(y_1,...,y_d)^{\rm T}\in \R^d:y_i\le0,\ 1\le i\le d\right\}.$$
For two vectors $u,v\in \R^d$, we write $u\preceq v$ if $u_i\leq v_i$ for all $1\leq i\leq d$. 
Let $A=(a_{ij})_{1\leq i,j\leq d},B=(b_{ij})_{1\leq i,j\leq d}\in \R^{d\times d}$, we write $A\preceq B$ if $a_{ij}\leq b_{ij}$ for all $1\leq i,j\leq d$.
For any $x\in\R^d$, we set $\|x\|:=\displaystyle\sum_{i=1}^d |x_i|$. 
Let $A$ be a matrix in $\R^{d\times d}$.  The transpose of $A$ is denoted by $A^{\rm T}$. The matrix $A$ is Metzler if its off-diagonal entries are nonnegative. It is said to be non-negative if all its entries are non-negative. $A$ is Hurwitz matrix if its spectrum $\sigma{(A)}$ satisfies the stable condition $$\sigma{(A)}\subset\{\lambda\in\C:\Re(\lambda)<0\}.$$
If $x^{\rm T}Ax \le 0,\; \forall x\in\R^d\setminus\{0\}$, the matrix $A$ is negative semi-definite and we write $A\le0$. Given a closed interval $J\subset \R$ and $X$ is a subset of $\R^d$, we define $C(J;\R^d)$ as the set of all continuous functions from $J$ to $X$.

For $\alpha \in (0,1]$ and $T>0$,  the Riemann--Liouville fractional integral of a function $x:[0,T] \rightarrow \mathbb R$ is defined by 
$$ I^\alpha_{0^+}x(t) := \frac{1}{\Gamma(\alpha)}\int_{0}^{t}(t-u)^{\alpha -1}x(u)du,\;t\in (0,T],$$
and its Caputo fractional derivative of the order $\alpha$ as 
$$ ^C D^\alpha_{0^+}x(t) := \frac{d}{dt}I^{1-\alpha}_{0^+}(x(t)-x(0)), \;t\in (0,T],$$
here $\Gamma(\cdot)$ is the Gamma function and $\displaystyle\frac{d}{dt}$ is the usual derivative. For $d\in\N$ and a vector-valued function $x(\cdot)$ in $\R^d,$ we use the notation
$$^{\!C}D^{{\alpha}}_{0^+}x(t):=\left(^{\!C}D^{\alpha}_{0^+}x_1(t),\dots,{^{\!C}D^{\alpha}}_{0^+}x_d(t)\right)^{\rm T}.$$
\section{A generalized fractional Halanay inequality}
In this part, we aim to derive a generalized Halanay-type inequality. To do this, some basic properties of the Mittag-Leffler functions need to be used (especially the sub-semigroup property of the classical Mittag-Leffler functions in Lemma \ref{duoicongtinh} below).

Let $\alpha,\beta\in \R_+$. The Mittag-Leffler function $E_{\alpha,\beta}(\cdot):\R\rightarrow \R$ is defined by 
$$E_{\alpha,\beta} (x):=\sum_{k=0}^\infty \frac{x^k}{\Gamma(\alpha k+\beta)},\;\forall x\in \R.$$ 
When $\beta=1$, for simplicity, we use the convention $E_\alpha(\cdot):=E_{\alpha,1}(\cdot)$ to denote the classical Mittag-Leffler function.

Throughout the rest of the paper, we always assume $\alpha\in (0,1]$.
\begin{lemma} 
	\begin{itemize}
		\item[(i)] $E_{\alpha}(t)>0,\ E_{\alpha,\alpha}(t)>0$ for all $t\in \R$ and $\displaystyle \lim_{t\to \infty}E_{\alpha}(-t)=0$.
		\item[(ii)] $\displaystyle\frac{d}{dt}E_{\alpha}(t)=\frac{1}{\alpha}E_{\alpha,\alpha}(t)$ for all $t\in \R$ and $^{\!C}D^{\alpha}_{0^+}E_{\alpha}(\lambda t^\alpha)=\lambda E_{\alpha}(\lambda t^\alpha)$ for all $\lambda\in \R,\;t\geq 0.$
	\end{itemize}
\end{lemma}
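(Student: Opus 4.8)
The plan is to prove each of the four assertions of the Lemma by reducing them to standard facts about power series and the Gamma function, handling positivity, the limit, and the two derivative identities in turn.

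\textbf{Part (i), positivity.} For $t \ge 0$ positivity of $E_\alpha(t)$ and $E_{\alpha,\alpha}(t)$ is immediate since every coefficient $1/\Gamma(\alpha k+\beta)$ is positive and the series has a nonnegative argument. The substantive case is $t<0$; here I would invoke the known complete monotonicity of $E_\alpha(-s)$ and $E_{\alpha,\alpha}(-s)$ for $s\ge 0$ when $\alpha\in(0,1]$ (a classical result of Pollard / Schneider), which in particular gives strict positivity. An alternative self-contained route is the integral representation $E_{\alpha}(-s) = \int_0^\infty e^{-s r} \, d\mu_\alpha(r)$ with $\mu_\alpha$ a positive measure (the spectral/Bernstein representation), from which positivity for all real $t$ follows and the statement $\lim_{t\to\infty} E_\alpha(-t)=0$ drops out by dominated convergence. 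I expect this to be the main obstacle, in the sense that it is the only point where one cannot argue termwise and must cite or reconstruct a nontrivial fact about Mittag-Leffler functions.

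\textbf{Part (i), the limit.} Granting the Bernstein representation above (or, more elementarily, the asymptotic expansion $E_\alpha(-t) = \frac{1}{\Gamma(1-\alpha)\,t} + O(t^{-2})$ as $t\to+\infty$, valid for $\alpha\in(0,1)$, together with the trivial case $\alpha=1$ where $E_1(-t)=e^{-t}$), the conclusion $\lim_{t\to\infty}E_\alpha(-t)=0$ is immediate. I would state the asymptotic expansion as a citation rather than deriving it.

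\textbf{Part (ii), the first identity.} Differentiating $E_\alpha(t)=\sum_{k\ge 0} t^k/\Gamma(\alpha k+1)$ term by term (justified by local uniform convergence of the differentiated series, since $E_\alpha$ is entire) gives $\frac{d}{dt}E_\alpha(t) = \sum_{k\ge 1} k\, t^{k-1}/\Gamma(\alpha k+1) = \sum_{j\ge 0} (j+1) t^{j}/\Gamma(\alpha j + \alpha + 1)$. Using the functional equation $\Gamma(\alpha j+\alpha+1) = (\alpha j+\alpha)\,\Gamma(\alpha j+\alpha) = \alpha(j+1)\Gamma(\alpha j+\alpha)$, the coefficient becomes $\frac{1}{\alpha}\cdot\frac{1}{\Gamma(\alpha j+\alpha)}$, so the sum equals $\frac{1}{\alpha}\sum_{j\ge 0} t^j/\Gamma(\alpha j+\alpha) = \frac{1}{\alpha}E_{\alpha,\alpha}(t)$, as claimed. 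This is a routine index shift plus one application of $\Gamma(x+1)=x\Gamma(x)$.

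\textbf{Part (ii), the Caputo identity.} For $y(t) := E_\alpha(\lambda t^\alpha) = \sum_{k\ge 0} \lambda^k t^{\alpha k}/\Gamma(\alpha k+1)$, I would compute $^C D^\alpha_{0^+}$ termwise using the standard rule $^C D^\alpha_{0^+} t^{\alpha k} = \frac{\Gamma(\alpha k+1)}{\Gamma(\alpha k+1-\alpha)} t^{\alpha k-\alpha}$ for $k\ge 1$ and $^C D^\alpha_{0^+}(\text{constant}) = 0$ for $k=0$ (this is exactly why the Caputo derivative is the right object and why the $k=0$ term is killed by subtracting $x(0)$ in the definition given in the excerpt). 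The surviving terms are $\sum_{k\ge 1} \lambda^k t^{\alpha(k-1)}/\Gamma(\alpha(k-1)+1)$; reindexing $j=k-1$ yields $\lambda \sum_{j\ge 0} \lambda^j t^{\alpha j}/\Gamma(\alpha j+1) = \lambda E_\alpha(\lambda t^\alpha)$. The only technical point is justifying the term-by-term application of $I^{1-\alpha}_{0^+}$ and $\frac{d}{dt}$, which follows from uniform convergence of the relevant series on compact subsets of $t\ge 0$; I would note this and not belabor it.
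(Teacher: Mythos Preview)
Your proposal is correct and follows essentially the same path as the paper. The paper's own proof is terser: for (i) it simply cites results from the Gorenflo--Kilbas--Mainardi--Rogosin monograph (complete monotonicity for positivity, the standard asymptotic for the limit), and for (ii) it says the first identity is ``a simple computation'' and derives the Caputo identity by noting that $E_\alpha(\lambda t^\alpha)$ is the unique solution of the initial value problem ${}^{\!C}D^\alpha_{0^+}x=\lambda x$, $x(0)=1$. Your explicit termwise computation of ${}^{\!C}D^\alpha_{0^+}E_\alpha(\lambda t^\alpha)$ is exactly what underlies that IVP characterization, so the two arguments coincide in substance; your write-up simply unpacks the citations.
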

\begin{proof}
\noindent (i) From \cite[Corolary 3.7, p. 29]{Gorenflo}, we have $\displaystyle \lim_{t\to \infty}E_{\alpha}(-t)=0$. The assertions $E_{\alpha}(t)>0,\ E_{\alpha,\alpha}(t)>0$ for all $t\in \R$ are implied from \cite[Proposition 3.23, p. 47]{Gorenflo} and \cite[Lemma 4.25, p. 86]{Gorenflo}.\\
\noindent (ii) By a simple computation, it is easy to check that  $\displaystyle\frac{d}{dt}E_{\alpha}(t)=\frac{1}{\alpha}E_{\alpha,\alpha}(t)$ for all $t\in \R$. The assertion $^{\!C}D^{\alpha}_{0^+}E_{\alpha}(\lambda t^\alpha)=\lambda E_{\alpha}(\lambda t^\alpha)$ for all $\lambda\in \R,\;t\geq 0$ is derived from the fact that the function $E_{\alpha}(\lambda t^\alpha)$ is the unique solution of the initial value problem 
\begin{equation*}
    \begin{cases}
        ^{\!C}D^{\alpha}_{0^+}x(t)&=\lambda x(t),\;t>0,\\
        x(0)&=1.
    \end{cases}
\end{equation*}
\end{proof}
\begin{lemma}(Sub-semigroup property) \cite[Lemma 4]{Ke1}\label{duoicongtinh}
	For $\lambda>0$ and $t,s\ge0$, we have
	\[E_{\alpha}(-\lambda t^{\alpha})E_{\alpha}(-\lambda s^{\alpha})\le E_{\alpha}(-\lambda (t+s)^{\alpha}).\]
\end{lemma}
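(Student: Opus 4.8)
\medskip
\noindent\textbf{Proof strategy.} The plan is to reduce the inequality to a convexity property of a single function. Set $f(t):=E_{\alpha}(-\lambda t^{\alpha})$ for $t\ge0$; by the preceding lemma $f(t)>0$ on $[0,\infty)$, while $f(0)=E_{\alpha}(0)=1$. The core claim is that $f$ is \emph{log-convex} on $[0,\infty)$, i.e.\ that $g:=\log f$ is convex. Granting this, observe that $g(0)=\log1=0$, so for $t,s>0$ we may write $t$ and $s$ as convex combinations of $t+s$ and $0$:
\begin{equation*}
t=\frac{t}{t+s}\,(t+s)+\frac{s}{t+s}\cdot0,\qquad s=\frac{s}{t+s}\,(t+s)+\frac{t}{t+s}\cdot0.
\end{equation*}
Convexity of $g$ together with $g(0)=0$ then gives $g(t)\le\frac{t}{t+s}g(t+s)$ and $g(s)\le\frac{s}{t+s}g(t+s)$; adding these and exponentiating yields $f(t)f(s)\le f(t+s)$, which is exactly the assertion (the cases $t=0$ or $s=0$ being immediate from $f(0)=1$).

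Everything therefore rests on the log-convexity of $f$, which I would deduce from complete monotonicity. It is classical (see, e.g., \cite{Gorenflo}) that $x\mapsto E_{\alpha}(-x)$ is completely monotone on $[0,\infty)$ for each $\alpha\in(0,1]$. Moreover $t\mapsto\lambda t^{\alpha}$ is a Bernstein function on $(0,\infty)$: it is nonnegative and its derivative $t\mapsto\alpha\lambda t^{\alpha-1}$ is completely monotone (this is where $\alpha\le1$ enters). Since the composition of a completely monotone function with a Bernstein function is again completely monotone, $f(t)=E_{\alpha}(-\lambda t^{\alpha})$ is completely monotone. By Bernstein's theorem $f(t)=\int_{0}^{\infty}e^{-tr}\,d\mu(r)$ for some Borel measure $\mu$ on $[0,\infty)$, necessarily a probability measure since $f$ is continuous with $f(0)=1$; the Cauchy--Schwarz inequality then gives, for all $t_{1},t_{2}\ge0$,
\begin{equation*}
f\!\left(\tfrac{t_{1}+t_{2}}{2}\right)^{2}=\left(\int_{0}^{\infty}e^{-\frac{t_{1}+t_{2}}{2}r}\,d\mu(r)\right)^{2}\le\int_{0}^{\infty}e^{-t_{1}r}\,d\mu(r)\int_{0}^{\infty}e^{-t_{2}r}\,d\mu(r)=f(t_{1})\,f(t_{2}).
\end{equation*}
Hence $f$ is midpoint log-convex, and being continuous it is log-convex on $[0,\infty)$, which closes the argument.

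The main obstacle is the single non-elementary input, the complete monotonicity of $E_{\alpha}(-\lambda t^{\alpha})$; once it is in hand the remaining steps are soft. A fully self-contained proof of that input can be given, e.g., via Pollard's integral representation of $E_{\alpha}(-x)$, or by noting that the Laplace transform $s\mapsto s^{\alpha-1}/(s^{\alpha}+\lambda)=1/(s+\lambda s^{1-\alpha})$ is a Stieltjes function (the reciprocal of the complete Bernstein function $s\mapsto s+\lambda s^{1-\alpha}$) and inverting. By contrast, trying to prove log-convexity directly from the power series of $E_{\alpha}$, or from the identity $^{\!C}D^{\alpha}_{0^+}f=-\lambda f$ with $f(0)=1$, looks substantially harder, and that is the step I would expect to demand the most work.
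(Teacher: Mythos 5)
Your proof is correct, and there is nothing in the paper to compare it against step by step: the lemma is stated with a citation to \cite[Lemma 4]{Ke1} and no proof is given in this paper. Your route is a clean and self-contained one modulo one classical input. The chain complete monotonicity of $x\mapsto E_{\alpha}(-x)$ for $\alpha\in(0,1]$ (Pollard's theorem, available in \cite{Gorenflo}), composition with the Bernstein function $t\mapsto\lambda t^{\alpha}$, the Bernstein--Widder representation $f(t)=\int_{0}^{\infty}e^{-tr}\,d\mu(r)$ with $\mu$ a probability measure (finiteness of $\mu$ follows from $f(0^{+})=1$ by monotone convergence, which you implicitly use and could state), Cauchy--Schwarz giving midpoint log-convexity, and continuity upgrading this to log-convexity is sound, and the final step is the standard fact that a convex function $g$ with $g(0)=0$ is superadditive on $[0,\infty)$, which yields $f(t)f(s)\le f(t+s)$; the cases $t=0$ or $s=0$ and the case $\alpha=1$ (where equality holds) are handled correctly. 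By contrast, the source cited here works in the framework in which $E_{\alpha}(-\lambda t^{\alpha})$ arises as a relaxation/resolvent function associated with a completely positive kernel, and the sub-semigroup estimate is obtained from that theory rather than spelled out via log-convexity; your argument buys a proof that only needs complete monotonicity plus elementary convexity, at the price of invoking Pollard's theorem (or one of the equivalent Stieltjes-function arguments you sketch), whereas the cited approach embeds the inequality in a more general operator-theoretic setting that the Halanay-type applications in this paper do not actually require. Your assessment of the main obstacle is also accurate: a direct attack through the power series of $E_{\alpha}$ or through the equation ${}^{\!C}D^{\alpha}_{0^+}f=-\lambda f$ would be substantially more delicate.
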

\begin{lemma} \cite[Lemma 25]{Cong-Tuan-Hieu} \label{compare-FDE}
	Let $x : [0, T] \rightarrow \R$ be continuous and the Caputo fractional derivative
	$^{\!C}D^{\alpha}_{0^+}x(t)$ exists on the interval $(0, T]$. If there exists $t_1\in (0,T]$ such that $x(t_1)=0$ and $x(t)<0,\ \forall t\in [0,t_1)$, then $$^{\!C}D^{\alpha}_{0^+}x(t_1)\geq 0.$$
\end{lemma}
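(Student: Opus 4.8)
The plan is to evaluate ${}^{\!C}D^{\alpha}_{0^+}x(t_1)$ through an integration-by-parts representation of the Caputo derivative, from which the sign will be immediate. The endpoint $\alpha=1$ is elementary: there ${}^{\!C}D^{1}_{0^+}x=x'$, and since $x(t_1)=0$ while $x(t_1-h)<0$ for all sufficiently small $h>0$, every left difference quotient $\big(x(t_1)-x(t_1-h)\big)/h=-x(t_1-h)/h$ is positive, so the derivative (assumed to exist) satisfies $x'(t_1)\ge0$. Assume henceforth $\alpha\in(0,1)$.

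The key identity is
\[ {}^{\!C}D^{\alpha}_{0^+}x(t_1)=\frac{1}{\Gamma(1-\alpha)}\left(\frac{x(t_1)-x(0)}{t_1^{\alpha}}+\alpha\int_{0}^{t_1}\frac{x(t_1)-x(s)}{(t_1-s)^{\alpha+1}}\,ds\right). \]
For $x\in C^1$ this follows from ${}^{\!C}D^{\alpha}_{0^+}x(t_1)=\frac{1}{\Gamma(1-\alpha)}\int_{0}^{t_1}(t_1-s)^{-\alpha}x'(s)\,ds$ by integrating by parts against $x(s)-x(t_1)$: the boundary term at $s=t_1$ vanishes (because $x(s)-x(t_1)=O(t_1-s)$ and $\alpha<1$) while the one at $s=0$ produces the first summand. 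Once the identity is available the conclusion drops out: by hypothesis $x(t_1)=0$, so $x(t_1)-x(0)=-x(0)>0$ and $x(t_1)-x(s)=-x(s)>0$ for every $s\in[0,t_1)$; since $\Gamma(1-\alpha)>0$ and $\alpha>0$, both summands on the right-hand side are strictly positive, whence ${}^{\!C}D^{\alpha}_{0^+}x(t_1)>0\ge0$.

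The obstacle is that the lemma assumes only that $x$ is continuous and that ${}^{\!C}D^{\alpha}_{0^+}x(t_1)$ exists, so the integral $\int_{0}^{t_1}(t_1-s)^{-\alpha-1}\big(x(t_1)-x(s)\big)\,ds$ need not converge a priori and the above integration by parts is not automatically licit. To argue from the bare hypotheses I would pass to the left difference quotients of $\Phi(t):=I^{1-\alpha}_{0^+}\big(x(\cdot)-x(0)\big)(t)$, whose limit at $t_1$ is ${}^{\!C}D^{\alpha}_{0^+}x(t_1)$. Splitting, for $0<h<t_1$,
\[ \Phi(t_1)-\Phi(t_1-h)=\frac{1}{\Gamma(1-\alpha)}\left(\int_{0}^{t_1-h}\!\big((t_1-s)^{-\alpha}-(t_1-h-s)^{-\alpha}\big)\big(x(s)-x(0)\big)\,ds+\int_{t_1-h}^{t_1}\!(t_1-s)^{-\alpha}\big(x(s)-x(0)\big)\,ds\right), \]
the contribution of the constant $-x(0)$ integrates explicitly to $\tfrac{-x(0)}{\Gamma(2-\alpha)}\big(t_1^{1-\alpha}-(t_1-h)^{1-\alpha}\big)$, which divided by $h$ tends to the positive number $\tfrac{-x(0)}{\Gamma(1-\alpha)}t_1^{-\alpha}$. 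In the part depending only on the values of $x$ (all $\le0$, with $x(t_1)=0$), the ``history'' integrand $\big((t_1-s)^{-\alpha}-(t_1-h-s)^{-\alpha}\big)x(s)$ is $\ge0$ and, after division by $h$, converges pointwise to $\alpha(t_1-s)^{-\alpha-1}\big(-x(s)\big)\ge0$, so by Fatou's lemma its liminf is at least $\alpha\int_0^{t_1}(t_1-s)^{-\alpha-1}(-x(s))\,ds\in[0,+\infty]$; the remaining ``recent'' integral over $[t_1-h,t_1]$ is $\le0$ and tends to $0$ once $x$ is known to be Hölder of order $>\alpha$ near $t_1$ — which, I expect, is precisely what finiteness of ${}^{\!C}D^{\alpha}_{0^+}x(t_1)$ forces. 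Combining these, ${}^{\!C}D^{\alpha}_{0^+}x(t_1)=\lim_{h\to0^+}\big(\Phi(t_1)-\Phi(t_1-h)\big)/h$ is the sum of a strictly positive term and a nonnegative one, hence $\ge0$, and the integral representation is recovered along the way. The step I expect to be most delicate is controlling the recent term $\tfrac1h\int_{t_1-h}^{t_1}(t_1-s)^{-\alpha}x(s)\,ds$ under the minimal hypothesis, since the two singular pieces of the difference quotient cannot be estimated separately and must be carried together to the limit.
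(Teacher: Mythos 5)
There is a genuine gap, and it sits exactly where you flag it. The first half of your proposal — the representation
\[
{}^{\!C}D^{\alpha}_{0^+}x(t_1)=\frac{1}{\Gamma(1-\alpha)}\left(\frac{x(t_1)-x(0)}{t_1^{\alpha}}+\alpha\int_{0}^{t_1}\frac{x(t_1)-x(s)}{(t_1-s)^{\alpha+1}}\,ds\right)
\]
followed by sign inspection — is the standard argument, but it is only licit under extra regularity (e.g. $x\in C^1$, or $x$ H\"older of order $>\alpha$ at $t_1$), which the lemma deliberately does not assume; note the paper itself offers no proof but quotes the result from the cited Lemma 25, whose whole point is to dispense with such regularity. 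The second half, meant to cover the actual hypotheses, does not close: the ``recent'' term $\tfrac1h\int_{t_1-h}^{t_1}(t_1-s)^{-\alpha}x(s)\,ds$ is never controlled. The mechanism you propose — that finiteness of ${}^{\!C}D^{\alpha}_{0^+}x(t_1)$ forces $x$ to be H\"older of order $>\alpha$ near $t_1$ — is both unproven and false as stated: a profile like $x(t)=-(t_1-t)^{\alpha}/\log^{2}\bigl(e+1/(t_1-t)\bigr)$ near $t_1$ is H\"older of no order $>\alpha$ at $t_1$, yet for it the candidate limit $\alpha\int_0^{t_1}(t_1-s)^{-\alpha-1}|x(s)|\,ds$ is finite and the recent term tends to $0$, so existence of the Caputo derivative at $t_1$ carries no such H\"older information. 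More seriously, the hypotheses do not exclude the scenario in which the history part of your difference quotient diverges to $+\infty$ while the recent part diverges to $-\infty$ with a finite sum; in that case Fatou on the history piece combined with ``recent $\le 0$'' yields nothing, since $\liminf(A_h+B_h)\ge\liminf A_h+\liminf B_h$ is vacuous when $\liminf B_h=-\infty$. So the two singular contributions really must be estimated jointly, which your outline acknowledges as the delicate step but does not carry out; as it stands the proof establishes the lemma only under an additional H\"older/$C^1$ assumption, not in the stated generality. (The $\alpha=1$ endpoint and the constant-term limit $\tfrac{-x(0)}{\Gamma(1-\alpha)}t_1^{-\alpha}>0$ are fine.)
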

\begin{theorem} \label{Theorem-Hala}
	Let $w:[-\tau,+\infty) \rightarrow \R_{\ge0}$ be continuous functions such that $^{\!C}D^{\alpha}_{0^+}w(\cdot)$ exists on $(0,+\infty)$ and $a(\cdot),\ b(\cdot),\ c(\cdot)$ are nonnegative continuous functions on $[0,+\infty).$ Consider the system
	\begin{align}
		^{\!C}D^{\alpha}_{0^+}w(t)&\le -a(t)w(t)+b(t) \sup_{t-q(t)\le s\le t}w(s)+c(t), \ t>0, \\
		w(s)&=\varphi(s), \ s\in [-\tau,0],
	\end{align}
where $\tau>0$, $\varphi:[-\tau,0] \rightarrow \R_{\geq 0}$ is a given continuous function and the delay function $q:\R_{\geq 0}\rightarrow [0,\tau]$ is continuous. Suppose that  $\displaystyle\sup_{t\ge0}c(t)=c^*$ and one of the following two conditions holds.
\begin{itemize}
	\item [(i)]   $a(\cdot)$ is bounded on the interval $[0,+\infty)$ and $a(t)-b(t)\ge \sigma>0, \ \forall t\ge 0$.
	\item [(ii)]  $a(\cdot)$ is not necessarily bounded on $[0,\infty)$, $a(t)\ge a_0>0, \ \forall t\ge 0$ and $$ \displaystyle\sup_{t\ge0}\frac{b(t)}{a(t)}\le p<1.$$
\end{itemize}
Then, there exists $w_0\ge0,\ \lambda^*>0$ such that
\begin{equation} \label{est_hala}
	w(t)\le w_0+ ME_{\alpha}(-\lambda^*t^\alpha), \ \forall t\ge 0,
\end{equation}
where $M=\displaystyle\sup_{s\in [-\tau,0]}|\varphi(s)|$.
\end{theorem}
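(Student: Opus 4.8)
The plan is to exhibit an explicit majorant of the form $u(t)=w_0+ME_{\alpha}(-\lambda^* t^{\alpha})$ and to force $w$ below it by the comparison principle of Lemma~\ref{compare-FDE}. To keep the inequalities strict I would first work, for an arbitrary $\delta>0$, with the perturbed function
\[
u_{\delta}(t):=(w_0+\delta)+(M+\delta)E_{\alpha}(-\lambda^* t^{\alpha}),\qquad t\ge 0,
\]
extended by the constant $w_0+M+2\delta$ on $[-\tau,0]$. Since $\tfrac{d}{dt}E_{\alpha}=\tfrac1\alpha E_{\alpha,\alpha}>0$, the map $t\mapsto E_{\alpha}(-\lambda^* t^{\alpha})$ is non-increasing, so $u_{\delta}$ is continuous and non-increasing on $[-\tau,\infty)$ and satisfies $u_{\delta}(s)>\varphi(s)=w(s)$ on $[-\tau,0]$. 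The constant $w_0$ is chosen from the hypotheses: in case~(i) one has $a(t)-b(t)\ge\sigma>0$ directly, and in case~(ii) $a(t)-b(t)=a(t)(1-\tfrac{b(t)}{a(t)})\ge a_0(1-p)=:\sigma>0$; in either case put $w_0:=c^*/\sigma\ge 0$, so that $-(a(t)-b(t))(w_0+\delta)+c(t)\le-\sigma\delta<0$ for every $t\ge 0$.

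The heart of the argument is to control the delayed supremum uniformly in time. Combining the sub-semigroup property (Lemma~\ref{duoicongtinh}) with the strict monotonicity of $E_{\alpha}$ on $\R$, I would show that for all $0\le s\le t$ with $t-s\le\tau$,
\[
E_{\alpha}(-\lambda^* s^{\alpha})\le\frac{E_{\alpha}(-\lambda^* t^{\alpha})}{E_{\alpha}(-\lambda^*\tau^{\alpha})},
\]
and that in the only regime where $t-q(t)<0$ (namely $t\le\tau$) the ratio on the right is $\ge 1$, whence $\varphi(s)\le M\le M\,E_{\alpha}(-\lambda^* t^{\alpha})/E_{\alpha}(-\lambda^*\tau^{\alpha})$ on $[-\tau,0]$. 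Consequently, as long as $w\le u_{\delta}$ on $[-\tau,t]$, one gets
\[
\sup_{t-q(t)\le s\le t}w(s)\le(w_0+\delta)+(M+\delta)\,\frac{E_{\alpha}(-\lambda^* t^{\alpha})}{E_{\alpha}(-\lambda^*\tau^{\alpha})}.
\]

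Next I would fix $\lambda^*>0$ so that $\lambda^*<a(t)-\dfrac{b(t)}{E_{\alpha}(-\lambda^*\tau^{\alpha})}$ for all $t\ge 0$. This implicit condition is solvable because $\lambda\mapsto E_{\alpha}(-\lambda\tau^{\alpha})$ is continuous with value $1$ at $\lambda=0$: in case~(i), using $a(t)\le\bar a$, the right-hand side is $\ge\sigma-\bar a\,\dfrac{1-E_{\alpha}(-\lambda^*\tau^{\alpha})}{E_{\alpha}(-\lambda^*\tau^{\alpha})}\to\sigma>0$ as $\lambda^*\to0^+$; in case~(ii) it is $\ge a_0\big(1-p/E_{\alpha}(-\lambda^*\tau^{\alpha})\big)\to a_0(1-p)>0$; in both cases it suffices to take $\lambda^*$ small enough. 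With $w_0$ and $\lambda^*$ so chosen, I argue by contradiction: if $w(t)<u_{\delta}(t)$ failed for some $t\ge 0$, let $t_1>0$ be the first instant with $w(t_1)=u_{\delta}(t_1)$ (so $w<u_{\delta}$ on $[-\tau,t_1)$). Lemma~\ref{compare-FDE} applied to $w-u_{\delta}$ on $[0,t_1]$ gives ${}^{\!C}D^{\alpha}_{0^+}w(t_1)\ge{}^{\!C}D^{\alpha}_{0^+}u_{\delta}(t_1)=-(M+\delta)\lambda^* E_{\alpha}(-\lambda^* t_1^{\alpha})$ (a constant has zero Caputo derivative and ${}^{\!C}D^{\alpha}_{0^+}E_{\alpha}(-\lambda^* t^{\alpha})=-\lambda^* E_{\alpha}(-\lambda^* t^{\alpha})$). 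Substituting $w(t_1)=u_{\delta}(t_1)$ and the supremum bound above into the differential inequality for $w$ and invoking the choices of $w_0$ and $\lambda^*$, the right-hand side becomes strictly less than $-(M+\delta)\lambda^* E_{\alpha}(-\lambda^* t_1^{\alpha})$, a contradiction. Hence $w(t)<u_{\delta}(t)$ on $[0,\infty)$ for every $\delta>0$, and letting $\delta\to0^+$ yields \eqref{est_hala}. The main obstacle — and the precise point where the sub-semigroup property is indispensable — is the uniform delay estimate, i.e.\ dominating $\sup_{[t-q(t),t]}w$ by a fixed multiple of $E_{\alpha}(-\lambda^* t^{\alpha})$ for all $t\ge 0$, transient included; once that is in hand, the selection of $\lambda^*$ and the contradiction argument are routine.
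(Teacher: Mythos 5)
Your proposal is correct, and its skeleton coincides with the paper's Step 3: compare $w$ with a majorant of the form $w_0+(M+\text{small})E_{\alpha}(-\lambda t^{\alpha})$, locate the first touching time, apply Lemma~\ref{compare-FDE} to the difference, and use the sub-semigroup property of Lemma~\ref{duoicongtinh} to dominate the delayed supremum; your choice $w_0=c^*/\sigma$ with $\sigma=a_0(1-p)$ in case (ii) is exactly the paper's constant. Where you genuinely deviate is in the construction of $\lambda^*$. The paper spends its Steps 1--2 defining, for each $t$, the root $\lambda(t)$ of $\lambda-a(t)+b(t)/E_{\alpha}(-\lambda q^{\alpha}(t))=0$ and then proving by a separate contradiction argument (with the auxiliary monotone functions $g_1,g_2$) that $\lambda^*:=\inf_{t\ge 0}\lambda(t)$ is strictly positive, which forces the extra $\varepsilon$-perturbation $\lambda^*-\varepsilon$ in Step 3 because the infimum need not be attained. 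You instead fix a single $\lambda^*$ directly from the uniform strict inequality $\lambda^*<a(t)-b(t)/E_{\alpha}(-\lambda^*\tau^{\alpha})$, whose solvability is immediate from continuity of $\lambda\mapsto E_{\alpha}(-\lambda\tau^{\alpha})$ at $\lambda=0$ under either (i) or (ii); this makes the paper's Steps 1--2 and the $\varepsilon$ in the rate unnecessary, and it also lets you treat the transient regime $t-q(t)<0$ inside the same uniform estimate (via the observation that $E_{\alpha}(-\lambda^*t^{\alpha})/E_{\alpha}(-\lambda^*\tau^{\alpha})\ge 1$ for $t\le\tau$) rather than splitting into the paper's Cases I and II. The trade-off is sharpness: by replacing $q(t)$ with the worst-case bound $\tau$ at the outset, your admissible $\lambda^*$ is generally smaller than the paper's $\inf_t\lambda(t)$, so the paper's formulation yields a better (and, in its later applications, more directly computable) decay rate, while your route is shorter and more elementary; since the theorem only asserts the existence of some $\lambda^*>0$, your argument fully proves the statement.
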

\begin{proof}  The proof is divided into three steps.
	
	\textbf{Step 1.} First, we prove that for each fixed $t\geq 0$, there is a unique $\lambda:=\lambda(t)>0$ that satisfies the equation
	\begin{equation} \label{exist_lambda}
		\lambda-a(t)+\frac{b(t)}{E_{\alpha}(-\lambda q^\alpha(t))}=0.	
	\end{equation}
	Indeed, let \[h(\lambda):=\lambda-a(t)+\frac{b(t)}{E_{\alpha}(-\lambda q^\alpha(t))}.\]
	By the fact that $h(\cdot)$ is a continuously differentiable function with respect to the variable $\lambda$ on $[0,+\infty)$, by a simple computation and Lemma 2.1(ii), we obtain
	\[h'(\lambda)=1+\frac{b(t)q^\alpha(t) E_{\alpha,\alpha}(-\lambda q^\alpha(t))}{\alpha\left(E_{\alpha}(-\lambda q^\alpha(t))\right)^2}>0, \ \forall \lambda \in \R_{\geq 0}.\]
	Notice that $h(0)=-a(t)+b(t)<0$ and $\displaystyle\lim_{\lambda\to \infty}h(\lambda)=\infty$. Thus, the equation \eqref{exist_lambda} ($h(\lambda)=0$) has a unique root $\lambda=\lambda(t)\in(0,\infty)$. 
	
	\textbf{Step 2.} Let 
	\[\lambda^*:=\inf_{t\ge0}\left\{\lambda(t):\lambda(t)-a(t)+\frac{b(t)}{E_{\alpha}(-\lambda(t)q^\alpha(t))}=0\right\}.\]
	It is obvious to see $\lambda^*\ge0$. Suppose by contradiction that $\lambda^*=0$. 
	
	Consider the case when the condition (i) is true. There is a $a_1>0$ with $a_1\geq a(t),\ \forall t\ge0.$ From the definition of $\lambda^*$, we can find a $t^1_*\geq 0$ so that $0<\lambda(t^1_*)<\epsilon_1$, where $\epsilon_1$ is small enough satisfying $\epsilon_1<\tilde{p}_1$	and $\tilde{p}_1$ is the unique root of the equation $\tilde{p}_1-\sigma+a_1\displaystyle\left[\frac{1}{E_{\alpha}(-\tilde{p}_1\tau^\alpha)}-1\right]=0.$ Furthermore,
	\begin{align*}
		0&=\lambda(t^1_*)-a(t^1_*)+\frac{b(t^1_*)}{E_{\alpha}(-\lambda(t^1_*)q^\alpha(t^1_*))}\\
		&<\epsilon_1-a(t^1_*)+\frac{a(t^1_*)-\sigma}{E_{\alpha}(-\lambda(t^1_*)q^\alpha(t^1_*))}\\
		&=\epsilon_1-\frac{\sigma}{E_{\alpha}(-\lambda(t^1_*)q^\alpha(t^1_*))}+a(t^1_*)\left[\frac{1}{E_{\alpha}(-\lambda(t^1_*)q^\alpha(t^1_*))}-1\right]\\
		&<\epsilon_1-\sigma+a_1\left[\frac{1}{E_{\alpha}(-\epsilon_1\tau^\alpha)}-1\right]\\
		&<\tilde{p}_1-\sigma+a_1\left[\frac{1}{E_{\alpha}(-\tilde{p}_1\tau^\alpha)}-1\right]
		=0,
	\end{align*}
a contradiction. Here, the final estimate above is derived from strictly increasing to the variable $t$ on $[0,\infty)$ of the function $g_1(\cdot)$ defined by $$g_1(t):=t-\sigma+a_1\displaystyle\left[\frac{1}{E_{\alpha}(-t\tau^\alpha)}-1\right].$$ 

Concerning the assumption (ii), there exists a $t_*^2\geq 0$ such that $0<\lambda(t_*^2)<\epsilon_2$, where $\epsilon_2>0$ is small enough satisfying
	\begin{equation}
		E_{\alpha}(-\epsilon_2 \tau^\alpha)>p \text{ and } \epsilon_2<\tilde{p}_2	
	\end{equation}
	with $\tilde{p}_2$ is the unique root of the equation $\displaystyle\tilde{p}_2-a_0+\frac{pa_0}{E_{\alpha}(-\tilde{p}_2\tau^\alpha)}=0.$
	From the fact that $g_2(t)=t-a_0+\displaystyle\frac{pa_0}{E_{\alpha}(-t\tau^\alpha)}$ is strictly increasing with respect to the variable $t$ on $[0,\infty)$, we conclude
	\begin{align*}
		0&=\lambda(t_*^2)-a(t_*^2)+\frac{b(t_*^2)}{E_{\alpha}(-\lambda(t_*^2)q^\alpha(t_*^2))}\\
		&<\epsilon_2-a(t_*^2)+\frac{pa(t_*^2)}{E_{\alpha}(-\lambda(t_*^2)q^\alpha(t_*^2))}\\
		&=\epsilon_2+a(t_*^2)\left[\frac{p}{E_{\alpha}(-\lambda(t_*^2)q^\alpha(t_*^2))}-1\right]\\
		&<\epsilon_2+a_0\left[\frac{p}{E_{\alpha}(-\epsilon_2\tau^\alpha)}-1\right]\\
		&<\tilde{p}_2+a_0\left[\frac{p}{E_{\alpha}(-\tilde{p}_2\tau^\alpha)}-1\right]
		=0,
	\end{align*}
a contradiction.
	
\textbf{Step 3.} Take
	\[M:=\sup_{s\in[-\tau,0]}|\varphi(s)|.\]
	Assume that (ii) is true. Let $w_0:=\displaystyle\frac{c^*}{(1-p)a_0}\ge0$. To verify the statement \eqref{est_hala}, we first show that
 \begin{equation}\label{addeq1}
     w(t)<w_0+(M+\varepsilon) E_\alpha(-(\lambda^*-\varepsilon)t^\alpha),\;\forall t\geq 0,
 \end{equation}
 where $\varepsilon>0$ is small arbitrarily ($\lambda^*-\varepsilon>0$). Suppose by contradiction that statement \eqref{addeq1} is not true.
Due to $w(0)=\varphi(0)<\displaystyle\frac{c^*}{(1-p)a_0}+M+\varepsilon$, there is a $t_1>0$ such that \begin{align*}
     w(t_1)&=w_0+(M+\varepsilon)E_{\alpha}(-(\lambda^*-\varepsilon)t_1^\alpha),\\
     w(t)&<w_0+(M+\varepsilon)E_{\alpha}(-(\lambda^*-\varepsilon)t^\alpha),\ \forall t\in [0,t_1).
 \end{align*}
	Define \[z(t)=w(t)-w_0-(M+\varepsilon)E_{\alpha}(-(\lambda^*-\varepsilon)t^\alpha),\ t\ge0.\]
	Then,
	\[z(t_1)=0 \text{ and } z(t)<0,\ \forall t\in [0,t_1),\]
	by Lemma \ref{compare-FDE}, it implies that 
 \begin{equation}\label{them1}
 ^{\!C}D^{\alpha}_{0^+}z(t_1)\ge0.
 \end{equation}On the other hand,
\begin{align*}
		{^{\!C}D}_{0^+}^{\alpha}z(t_1)&={^{\!C}D}_{0^+}^{\alpha}w(t_1)+(M+\varepsilon)(\lambda^*-\varepsilon)E_{\alpha}(-(\lambda^*-\varepsilon)t_1^\alpha) \\
		&\le -a(t_1)w(t_1)+b(t_1)\sup_{t_1-q(t_1)\le s\le t_1}w(s)+(M+\varepsilon)(\lambda^*-\varepsilon)E_{\alpha}(-(\lambda^*-\varepsilon)t_1^\alpha)+c^* \\
		&=-w_0a(t_1)-a(t_1)(M+\varepsilon)E_{\alpha}(-(\lambda^*-\varepsilon)t_1^\alpha)+(M+\varepsilon)(\lambda^*-\varepsilon)E_{\alpha}(-(\lambda^*-\varepsilon)t_1^\alpha)\\
  &\hspace{5cm}+b(t_1)\sup_{t_1-q(t_1)\le s\le t_1}w(s)+c^*.
\end{align*}
Noting that $h(\cdot)$ is strictly increasing on $[0,+\infty)$, we have
\[\lambda^*-\varepsilon-a(t_1)+\frac{b(t_1)}{E_{\alpha}(-(\lambda^*-\varepsilon)q^\alpha(t_1))}< \lambda(t_1)-a(t_1)+\frac{b(t_1)}{E_{\alpha}(-\lambda(t_1)q^\alpha(t_1))}.\]
{\bf Case I:} $t_1\le q(t_1)$. It is easy to check that $\displaystyle\sup_{t_1-q(t_1)\le s\le t_1}w(s)<w_0+(M+\varepsilon)$. From this,
\begin{align*}
	^{\!C}D_{0^+}^{\alpha}z(t_1)&<-w_0a(t_1)-a(t_1)(M+\varepsilon)E_{\alpha}(-(\lambda^*-\varepsilon)t_1^\alpha)+(M+\varepsilon)(\lambda^*-\varepsilon)E_{\alpha}(-(\lambda^*-\varepsilon)t_1^\alpha)\\
 &\hspace{3cm}+(M+\varepsilon)b(t_1)+w_0b(t_1)+c^*\\
	&=(M+\varepsilon)E_{\alpha}(-(\lambda^*-\varepsilon)t_1^\alpha)\left[\lambda^*-\varepsilon-a(t_1)+\frac{b(t_1)}{E_{\alpha}(-(\lambda^*-\varepsilon)t_1^\alpha)}\right]\\
 &\hspace{3cm}+a(t_1)\left[w_0\frac{b(t_1)}{a(t_1)}-w_0+\frac{c^*}{a(t_1)}\right]\\
	&\le (M+\varepsilon)E_{\alpha}(-(\lambda^*-\varepsilon)t_1^\alpha)\left[\lambda^*-\varepsilon-a(t_1)+\frac{b(t_1)}{E_{\alpha}(-(\lambda^*-\varepsilon)q^\alpha(t_1))}\right]\\
 &\hspace{3cm}+a(t_1)\left[w_0p-w_0+\frac{c^*}{a_0}\right]\\
	&< (M+\varepsilon)E_{\alpha}(-(\lambda^*-\varepsilon)t_1^\alpha)\left[\lambda(t_1)-a(t_1)+\frac{b(t_1)}{E_{\alpha}(-\lambda(t_1)q^\alpha(t_1))}\right]\\
 &=0, 
\end{align*}
which contracts \eqref{them1}.

{\bf Case 2:} $t_1>q(t_1)$. In this case, we observe that
 \begin{align*}
 \displaystyle\sup_{t_1-q(t_1)\le s\le t_1}w(s)&\le w_0+(M+\varepsilon)\displaystyle\sup_{t_1-q(t_1)\le s\le t_1}E_{\alpha}(-(\lambda^*-\varepsilon)s^\alpha)\\
 &=w_0+(M+\varepsilon)E_{\alpha}(-(\lambda^*-\varepsilon)(t_1-q(t_1))^\alpha).
 \end{align*} 
 This together with Lemma \ref{duoicongtinh} leads to
\begin{align*}
	^{\!C}D^{\alpha}_{0^+}z(t_1)&\le-a(t_1)(M+\varepsilon)E_{\alpha}(-(\lambda^*-\varepsilon)t_1^\alpha)+(M+\varepsilon)(\lambda^*-\varepsilon)E_{\alpha}(-(\lambda^*-\varepsilon)t_1^\alpha)\\
	&\hspace{2.25cm}+b(t_1)E_{\alpha}(-(\lambda^*-\varepsilon)(t_1-q(t_1))^\alpha)+w_0\left[b(t_1)-a(t_1)\right]+c^*\\
	&\le (M+\varepsilon)E_{\alpha}(-(\lambda^*-\varepsilon)t_1^\alpha)\left[\lambda^*-\varepsilon-a(t_1)+\frac{b(t_1)E_{\alpha}(-(\lambda^*-\varepsilon)(t_1-q(t_1))^\alpha)}{E_{\alpha}(-(\lambda^*-\varepsilon)t_1^\alpha)}\right]\\
	&\le (M+\varepsilon)E_{\alpha}(-(\lambda^*-\varepsilon)t_1^\alpha)\left[\lambda^*-\varepsilon-a(t_1)+\frac{b(t_1)}{E_{\alpha}(-(\lambda^*-\varepsilon)q^\alpha(t_1))}\right]\\
	&< (M+\varepsilon)E_{\alpha}(-(\lambda^*-\varepsilon)t_1^\alpha)\left[\lambda(t_1)-a(t_1)+\frac{b(t_1)}{E_{\alpha}(-\lambda(t_1)q^\alpha(t_1))}\right]\\
 &=0,
\end{align*}
a contradiction with \eqref{them1}. In short, we assert that \eqref{addeq1} holds. Let $\varepsilon\to 0$, the estimate \eqref{est_hala} is checked completely.

If the condition (i) is true, choosing $w_0=\displaystyle\frac{c^*}{\sigma}\ge0$ and arguing similarly to the above proof, we also get the desired estimate.
\end{proof}
\begin{remark}
   The theorem \ref{Theorem-Hala} is an extended and improved version of \cite[Lemma 2.3]{Wang_15}, \cite[Lemma 4]{Dongling Wang} and \cite[Theorem 1.2]{NNThang}. 
\end{remark}
\begin{remark}
The key point in the proof of Theorem \ref{Theorem-Hala} is to compare the decay solutions of the original inequality with a given classical Mittag-Leffler function. The difficulty one faces in this situation is that Mittag-Leffler functions in general do not have the semigroup property as exponential functions. Fortunately, the sub-semigroup property (see Lemma \ref{duoicongtinh}) is enough for us to overcome that obstacle.
\end{remark}
Using similar arguments in the proof of Theorem \ref{Theorem-Hala}, we can easily extend this result to the case of various bounded delays as follows.
\begin{corollary} \label{Coro-Hala}
	Let $w:[-\tau,+\infty) \rightarrow \R_+$ be a continuous function such that $^{\!C}D^{\alpha}_{0^+}w(\cdot)$ exists on $(0,+\infty)$ and $a(\cdot),\ b_k(\cdot),\ c(\cdot)$ are nonnegative continuous functions on $[0,+\infty)$, $k=1,\dots,m.$ Consider the system
	\begin{align*}
		^{\!C}D^{\alpha}_{0^+}w(t)&\le -a(t)w(t)+\sum_{k=1}^mb_k(t) \sup_{t-q_k(t)\le s\le t}w(s)+c(t), \ t>0, \\
		w(t)&=\varphi(t), \ t\in [-\tau,0],
	\end{align*}
	where $\varphi:[-\tau,0] \rightarrow \R_+$ is continuous, the delays $q_k(\cdot)$, $k=1,\dots,m$, are continuous and bounded by $\tau$, i.e., $0\le q_k(t)\le \tau,\ \forall t\ge0,\ \forall k=1,\dots,m$. Suppose that  $\displaystyle\sup_{t\ge0}c(t)=c^*$ and one of the following two conditions is true.
	\begin{itemize}
		\item [(C1)]  $a(\cdot)$ is bounded on $[0,+\infty)$, $a(t)-\displaystyle\sum_{k=1}^mb_k(t)\ge \sigma>0, \ \forall t\ge 0$.
		\item [(C2)]  $a(\cdot)$ is not necessarily bounded on $[0,\infty)$, $a(t)\ge a_0>0, \ \forall t\ge 0$ and 
  \[\sup_{t\ge0}\sum_{k=1}^m\frac{b_k(t)}{a(t)}\le p<1.\] 
	\end{itemize}
	Then, there exists $w_0>0,\, \lambda^*>0$ such that
	\begin{equation*}
		w(t)\le w_0+ \sup_{s\in [-\tau,0]}|\varphi(s)|E_{\alpha}(-\lambda^*t^\alpha), \ \forall t\ge 0,
	\end{equation*}
\end{corollary}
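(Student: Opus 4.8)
The plan is to run the three-step argument of Theorem~\ref{Theorem-Hala} almost verbatim, with the single delayed term $b(t)\sup_{t-q(t)\le s\le t}w(s)$ replaced by the finite sum $\sum_{k=1}^{m}b_k(t)\sup_{t-q_k(t)\le s\le t}w(s)$. The modifications are mostly bookkeeping; the only genuinely new point is the uniform (in $k$) control of the delayed suprema.

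\textbf{Step 1 (the characteristic root).} For a fixed $t\ge0$ I would set
\[h(\lambda):=\lambda-a(t)+\sum_{k=1}^{m}\frac{b_k(t)}{E_\alpha(-\lambda q_k^\alpha(t))},\qquad \lambda\ge0,\]
and check exactly as before that $h\in C^1([0,\infty))$ with
\[h'(\lambda)=1+\sum_{k=1}^{m}\frac{b_k(t)q_k^\alpha(t)E_{\alpha,\alpha}(-\lambda q_k^\alpha(t))}{\alpha\bigl(E_\alpha(-\lambda q_k^\alpha(t))\bigr)^2}>0\]
by Lemma~2.1, that $h(0)=-a(t)+\sum_{k=1}^{m}b_k(t)<0$ (here (C1) or (C2) enters, giving $h(0)\le-\sigma$, resp.\ $h(0)\le(p-1)a(t)$), and that $h(\lambda)\to\infty$ as $\lambda\to\infty$ since each $E_\alpha(-\lambda q_k^\alpha(t))\in(0,1]$. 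Hence there is a unique $\lambda(t)\in(0,\infty)$ with $h(\lambda(t))=0$, and I put $\lambda^*:=\inf_{t\ge0}\lambda(t)\ge0$.

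\textbf{Step 2 ($\lambda^*>0$).} Suppose $\lambda^*=0$. Since $x\mapsto E_\alpha(-x)$ is strictly decreasing on $[0,\infty)$ (its derivative is $-\tfrac1\alpha E_{\alpha,\alpha}(-x)<0$ by Lemma~2.1) and $q_k(t)\le\tau$, one has $E_\alpha(-\lambda q_k^\alpha(t))\ge E_\alpha(-\lambda\tau^\alpha)$, so
\[\sum_{k=1}^{m}\frac{b_k(t)}{E_\alpha(-\lambda q_k^\alpha(t))}\le\frac{\sum_{k=1}^{m}b_k(t)}{E_\alpha(-\lambda\tau^\alpha)},\]
and $\sum_{k=1}^{m}b_k(t)\le a(t)-\sigma$ under (C1), $\le p\,a(t)$ under (C2). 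Inserting a $t$ with $0<\lambda(t)<\epsilon$ ($\epsilon$ small) into $h(\lambda(t))=0$ and using these bounds, the computation becomes word-for-word the one in Step~2 of Theorem~\ref{Theorem-Hala}: the strictly increasing functions $g_1(s)=s-\sigma+a_1\bigl[\tfrac{1}{E_\alpha(-s\tau^\alpha)}-1\bigr]$ with $a_1=\sup_{t\ge0}a(t)$ (case (C1)), resp.\ $g_2(s)=s-a_0+\tfrac{p a_0}{E_\alpha(-s\tau^\alpha)}$ (case (C2)), and their unique positive roots, produce a contradiction; thus $\lambda^*>0$.

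\textbf{Step 3 (comparison).} Set $M:=\sup_{s\in[-\tau,0]}|\varphi(s)|$ and $w_0:=c^*/\sigma$ under (C1), $w_0:=c^*/\bigl((1-p)a_0\bigr)$ under (C2). Fixing $\varepsilon\in(0,\lambda^*)$, I would show $w(t)<w_0+(M+\varepsilon)E_\alpha(-(\lambda^*-\varepsilon)t^\alpha)$ for all $t\ge0$; since $w(0)<w_0+M+\varepsilon$, a failure gives a first $t_1>0$ at which $z(t):=w(t)-w_0-(M+\varepsilon)E_\alpha(-(\lambda^*-\varepsilon)t^\alpha)$ vanishes while $z<0$ on $[0,t_1)$, whence ${}^{\!C}D^{\alpha}_{0^+}z(t_1)\ge0$ by Lemma~\ref{compare-FDE}. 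Using ${}^{\!C}D^{\alpha}_{0^+}E_\alpha(-(\lambda^*-\varepsilon)t^\alpha)=-(\lambda^*-\varepsilon)E_\alpha(-(\lambda^*-\varepsilon)t^\alpha)$, the differential inequality, and $w(t_1)=w_0+(M+\varepsilon)E_\alpha(-(\lambda^*-\varepsilon)t_1^\alpha)$, I bound ${}^{\!C}D^{\alpha}_{0^+}z(t_1)$ from above, the crucial per-$k$ estimate being
\[\sup_{t_1-q_k(t_1)\le s\le t_1}w(s)\le w_0+(M+\varepsilon)\,\frac{E_\alpha(-(\lambda^*-\varepsilon)t_1^\alpha)}{E_\alpha(-(\lambda^*-\varepsilon)q_k^\alpha(t_1))},\]
which I check by splitting into the case $t_1\le q_k(t_1)$ (the supremum is $<w_0+(M+\varepsilon)$ and the quotient is $\ge1$ because $E_\alpha(-\cdot)$ decreases) and the case $t_1>q_k(t_1)$ (the supremum is $\le w_0+(M+\varepsilon)E_\alpha(-(\lambda^*-\varepsilon)(t_1-q_k(t_1))^\alpha)$ by monotonicity, and Lemma~\ref{duoicongtinh} with $\lambda=\lambda^*-\varepsilon$ turns this into the stated bound). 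Substituting and regrouping yields
\[{}^{\!C}D^{\alpha}_{0^+}z(t_1)\le(M+\varepsilon)E_\alpha(-(\lambda^*-\varepsilon)t_1^\alpha)\,h(\lambda^*-\varepsilon)+\Bigl(w_0\sum_{k=1}^{m}b_k(t_1)-w_0a(t_1)+c^*\Bigr),\]
where $h(\lambda^*-\varepsilon)<h(\lambda(t_1))=0$ because $h(\cdot)$ is strictly increasing and $\lambda^*-\varepsilon<\lambda^*\le\lambda(t_1)$, and the parenthesised term is $\le0$ by the choice of $w_0$ (using $a(t_1)-\sum_k b_k(t_1)\ge\sigma$ under (C1); $\sum_k b_k(t_1)\le p\,a(t_1)$ and $a(t_1)\ge a_0$ under (C2)). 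Hence ${}^{\!C}D^{\alpha}_{0^+}z(t_1)<0$, a contradiction; letting $\varepsilon\to0^+$ gives $w(t)\le w_0+M\,E_\alpha(-\lambda^*t^\alpha)$. The only step that is not a mechanical transcription of the proof of Theorem~\ref{Theorem-Hala} is packaging the two cases $t_1\le q_k(t_1)$ and $t_1>q_k(t_1)$ into the single quotient bound above via the sub-semigroup property (Lemma~\ref{duoicongtinh}) — the very device already used there for one delay — so I expect no real obstacle.
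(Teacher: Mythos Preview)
Your proposal is correct and is precisely what the paper intends: the paper gives no proof of the corollary beyond the sentence ``Using similar arguments in the proof of Theorem~\ref{Theorem-Hala}, we can easily extend this result to the case of various bounded delays,'' and you have supplied those arguments in full. Your packaging of the two cases $t_1\le q_k(t_1)$ and $t_1>q_k(t_1)$ into the single per-$k$ quotient bound via Lemma~\ref{duoicongtinh} is a tidy device (it avoids a $2^m$-fold case split when summing over $k$), but it is exactly the mechanism already used in Step~3 of Theorem~\ref{Theorem-Hala}, so the approach is the same.
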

where \begin{align*}
    \lambda^*&=\inf_{t\ge0}\left\{\lambda(t):\lambda(t)-a(t)+\displaystyle\sum_{k=1}^m\frac{b_k(t)}{E_{\alpha}(-\lambda(t)q_k^\alpha(t))}=0\right\},\\
    w_0&=\begin{cases}
    \displaystyle\frac{c^*}{\sigma} & \text{in the case when the assumption \textup{(C1)} is satisfied,}\\
    \displaystyle\frac{c^*}{(1-p)a_0} &\text{in the case when the assumption \textup{(C2)} is satisfied.}
\end{cases}
\end{align*}
\section{Mittag-Leffler stability of fractional-order delay linear systems}
\subsection{Fractional-order delay systems with a structure that preserves the order of solutions}
The positive fractional-order system has been studied by many authors before, see e.g., \cite{Shen, GCM_20, Jia, BS_22, Tuan-Thinh_23, Thinh-Tuan}. The method was to use comparison arguments. In the current work, we are concerned with these systems when their initial conditions are arbitrary by exploiting a Halanay-type inequality combined with the property of preserving the order of the solutions. This is a new approach that seems to have never appeared in the literature.

Our research object in this section is the system
\begin{align}
{^{\!C}D}^{{\alpha}}_{0^+}x(t)&=A(t)x(t)+B(t)x(t-q(t)),\, \forall t>0,  \label{Eq-main}\\
		x(t)&=\varphi(t),\; \forall t\in [-\tau,0],  \label{initial-con}
\end{align}
where $A(\cdot)$, $B(\cdot):[0,+\infty) \rightarrow \R^{d\times d}$ are continuous matrix-valued functions, the delay function $q(\cdot):[0,+\infty)  \rightarrow [0,\tau]$ is continuous, and $\varphi(\cdot):[-\tau,0] \rightarrow \R^d$ is a given continuous initial condition. Due to \cite[Theorem 2.2]{Tuan_Hieu}, it can be shown that the initial value problem \eqref{Eq-main}--\eqref{initial-con} has a unique global solution on $[-\tau,+\infty)$ denoted by $\Phi(\cdot,\varphi)$.
\begin{lemma}  \cite[Lemma 2.1]{Thinh-Tuan} \label{Pos-system} Suppose that for each $t\in[0,+\infty),\ A(t)$ is a Metzler matrix and $B(t)$ is a nonnegative matrix. Then, for any initial condition $\varphi(\cdot)\succeq 0$ on $[-\tau,0],$ the solution $\Phi(\cdot,\varphi)$ of the systems \eqref{Eq-main}--\eqref{initial-con} satisfies \[\Phi(\cdot,\varphi)\succeq0 \ \text{on}\ [0,+\infty).\]  
\end{lemma}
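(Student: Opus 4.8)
The plan is to prove positivity by an $\varepsilon$-perturbation argument that reduces the statement to a \emph{strict} positivity claim, which can then be handled by a first-crossing-time contradiction based on the comparison principle of Lemma~\ref{compare-FDE}.

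First I would introduce, for each $\varepsilon>0$, the auxiliary affine delay system
$$^{\!C}D^{\alpha}_{0^+}x^{\varepsilon}(t)=A(t)x^{\varepsilon}(t)+B(t)x^{\varepsilon}(t-q(t))+\varepsilon\mathbf 1,\qquad x^{\varepsilon}(t)=\varphi(t)+\varepsilon\mathbf 1,\ \ t\in[-\tau,0],$$
where $\mathbf 1=(1,\dots,1)^{\rm T}\in\R^d$. By the same well-posedness result (\cite[Theorem 2.2]{Tuan_Hieu}) this problem has a unique global solution $x^{\varepsilon}(\cdot)$ on $[-\tau,+\infty)$, and the key claim is that $x^{\varepsilon}(t)\succ 0$ (componentwise) for every $t\ge 0$.

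To establish the claim, I would argue by contradiction. Since $x^{\varepsilon}$ is continuous and $x^{\varepsilon}(0)=\varphi(0)+\varepsilon\mathbf 1\succ 0$, the map $t\mapsto\min_{1\le i\le d}x^{\varepsilon}_i(t)$ is continuous and strictly positive at $t=0$; setting $t_1:=\inf\{t>0:\min_i x^{\varepsilon}_i(t)\le 0\}$ one gets $t_1>0$, $\min_i x^{\varepsilon}_i(t)>0$ for $t\in[0,t_1)$, and, by continuity, $\min_i x^{\varepsilon}_i(t_1)=0$. Pick an index $j$ with $x^{\varepsilon}_j(t_1)=0$; then $x^{\varepsilon}_j(t)>0$ on $[0,t_1)$, so applying Lemma~\ref{compare-FDE} to $-x^{\varepsilon}_j$ on $[0,t_1]$ gives $^{\!C}D^{\alpha}_{0^+}x^{\varepsilon}_j(t_1)\le 0$. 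On the other hand, evaluating the $j$-th component equation at $t=t_1$ and using that $A(t_1)$ is Metzler (so $A_{jk}(t_1)\ge 0$ for $k\ne j$, while the diagonal term $A_{jj}(t_1)x^{\varepsilon}_j(t_1)=0$), that $x^{\varepsilon}_k(t_1)\ge 0$ for all $k$, that $B(t_1)$ is nonnegative, and that $x^{\varepsilon}_k(t_1-q(t_1))\ge 0$ for every $k$ — indeed if $t_1-q(t_1)\in[-\tau,0]$ then $x^{\varepsilon}_k=\varphi_k(\cdot)+\varepsilon>0$ because $\varphi\succeq 0$, if $t_1-q(t_1)\in(0,t_1)$ then $x^{\varepsilon}_k>0$ by the choice of $t_1$, and $t_1-q(t_1)=t_1$ only when $q(t_1)=0$, in which case $x^{\varepsilon}_k(t_1)\ge 0$ anyway — one obtains $^{\!C}D^{\alpha}_{0^+}x^{\varepsilon}_j(t_1)\ge\varepsilon>0$, contradicting the inequality above. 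Hence $x^{\varepsilon}(t)\succ 0$ for all $t\ge 0$.

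Finally I would let $\varepsilon\to 0^+$: invoking continuous dependence of the solution of \eqref{Eq-main}--\eqref{initial-con} on the initial datum and on the inhomogeneous term, $x^{\varepsilon}(t)\to\Phi(t,\varphi)$ as $\varepsilon\to 0$ for each fixed $t\ge 0$ (uniformly on compact time intervals), and passing to the limit in $x^{\varepsilon}(t)\succ 0$ yields $\Phi(t,\varphi)\succeq 0$ for all $t\ge 0$. I expect the main obstacle to be the bookkeeping in the contradiction step — in particular isolating a single component $j$ whose trajectory is \emph{strictly} positive on $[0,t_1)$ so that Lemma~\ref{compare-FDE} genuinely applies, and correctly handling the delay term when $t_1-q(t_1)\le 0$ — together with the justification of the continuous-dependence passage to the limit; the perturbation by $\varepsilon\mathbf 1$ is exactly the device that makes both of these clean, since without it the strict inequality $x_j(t)>0$ on $[0,t_1)$ need not hold.
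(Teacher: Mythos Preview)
The paper does not give its own proof of this lemma: it is stated with a citation to \cite[Lemma~2.1]{Thinh-Tuan} and used as an imported result. Your $\varepsilon$-perturbation plus first-crossing-time argument is the standard route to such positivity statements and is essentially correct; the use of Lemma~\ref{compare-FDE} on $-x^{\varepsilon}_j$ is exactly the right comparison principle, the Metzler/nonnegative structure kills all potentially negative terms at $t_1$, and the additive $\varepsilon\mathbf 1$ forcing supplies the strict inequality needed for the contradiction. The only point that would deserve a line of justification in a self-contained write-up is the continuous dependence step $x^{\varepsilon}\to\Phi(\cdot,\varphi)$ as $\varepsilon\to 0$, which follows from the Volterra integral formulation of \eqref{Eq-main}--\eqref{initial-con} and a Gr\"onwall-type estimate on compact intervals, but you have correctly flagged this yourself.
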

\begin{lemma} \label{Pro-order-pre} Consider the system \eqref{Eq-main}. Assume that $A(t)$ is a Metzler Matrix and $B(t)$ is a nonnegative matrix for each $t\ge0$. Let $\varphi,\ \overline{\varphi}\in C([-\tau,0];\R^d)$ with $\varphi(s)\preceq\overline{\varphi}(s),\ \forall s\in[-\tau,0]$. Then, 
\[\Phi(t,\varphi)\preceq\Phi(t,\overline{\varphi}) \ \text{for all}\ t\ge0.\] 
\end{lemma}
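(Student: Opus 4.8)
The plan is to reduce the statement to the positivity result of Lemma \ref{Pos-system} by exploiting the linearity of the system \eqref{Eq-main}. Set
\[
y(\cdot) := \Phi(\cdot,\overline{\varphi}) - \Phi(\cdot,\varphi) \ \text{ on } [-\tau,+\infty),
\qquad
\psi(\cdot) := \overline{\varphi}(\cdot) - \varphi(\cdot) \ \text{ on } [-\tau,0].
\]
Since the Caputo derivative is linear and the maps $x\mapsto A(t)x$, $x\mapsto B(t)x$ are linear, $y(\cdot)$ satisfies ${^{\!C}D}^{\alpha}_{0^+}y(t) = A(t)y(t) + B(t)y(t-q(t))$ for $t>0$ together with $y(t)=\psi(t)$ for $t\in[-\tau,0]$. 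By the uniqueness part of the global well-posedness result \cite[Theorem 2.2]{Tuan_Hieu} already invoked above, $y(\cdot)=\Phi(\cdot,\psi)$; in other words, the solution operator $\varphi\mapsto\Phi(\cdot,\varphi)$ is linear.

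Next I would use the hypothesis: $\varphi(s)\preceq\overline{\varphi}(s)$ for all $s\in[-\tau,0]$ means exactly that $\psi(\cdot)\succeq 0$ on $[-\tau,0]$, i.e.\ $\psi$ takes values in $\R^d_{\ge 0}$. Because $A(t)$ is Metzler and $B(t)$ is nonnegative for each $t\ge 0$, Lemma \ref{Pos-system} applies to the initial datum $\psi$ and gives $\Phi(t,\psi)\succeq 0$ for all $t\ge 0$. Combining this with the identity from the previous step yields $\Phi(t,\overline{\varphi})-\Phi(t,\varphi)=y(t)=\Phi(t,\psi)\succeq 0$ for all $t\ge 0$, which is precisely $\Phi(t,\varphi)\preceq\Phi(t,\overline{\varphi})$, as asserted.

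There is essentially no difficult step here: the lemma is a one-line consequence of linearity plus Lemma \ref{Pos-system}. The only point requiring a word of justification is the appeal to linearity of the solution map, which rests on the global existence and uniqueness of solutions of \eqref{Eq-main}--\eqref{initial-con}, and this is exactly what \cite[Theorem 2.2]{Tuan_Hieu} provides. One could alternatively bypass the citation and run a direct first-contact argument on $y$ via Lemma \ref{compare-FDE} (if some component $y_i$ first vanishes at a time $t_1$ while $y\succeq 0$ on $[0,t_1)$, then ${^{\!C}D}^{\alpha}_{0^+}y_i(t_1)\ge 0$, yet the right-hand side forces it to be $\le 0$ only if it is $0$, etc.), but this merely re-proves Lemma \ref{Pos-system}, so invoking that lemma is the cleaner route.
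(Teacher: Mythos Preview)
Your proof is correct and follows essentially the same approach as the paper: define the difference $z(t)=\Phi(t,\overline{\varphi})-\Phi(t,\varphi)$, observe by linearity that it solves \eqref{Eq-main} with the nonnegative initial datum $\overline{\varphi}-\varphi$, and apply Lemma~\ref{Pos-system}. The only cosmetic difference is that you explicitly invoke uniqueness to identify $y$ with $\Phi(\cdot,\psi)$, whereas the paper applies Lemma~\ref{Pos-system} directly to the difference.
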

\begin{proof}
Define
\[z(t):=\Phi(t,\overline{\varphi})-\Phi(t,\varphi),\;\forall t\geq -\tau.\]
Then,
    \begin{align*}
^{\!C}D^{{\alpha}}_{0^+}z(t)&=^{\!C}D^{{\alpha}}_{0^+}\Phi(t,\overline{\varphi}) -^{\!C}D^{{\alpha}}_{0^+}\Phi(t,\varphi)\\
&=\bigg(A(t)\Phi(t,\overline{\varphi})+B(t)\Phi(t-q(t),\overline{\varphi})\bigg)-\bigg(A(t)\Phi(t,{\varphi})+B(t)\Phi(t-q(t),{\varphi})\bigg)\\
&=A(t)\bigg[\Phi(t,\overline{\varphi})-\Phi(t,{\varphi})\bigg]+B(t)\bigg[\Phi(t-q(t),\overline{\varphi})-\Phi(t-q(t),{\varphi})\bigg]\\
&=A(t)z(t)+B(t)z(t-q(t)),\;\forall t>0,
    \end{align*}
   and
    \[z(s)=\overline{\varphi}(s)-{\varphi}(s)\succeq0\ \text{for all}\ s\in[-\tau,0].\]
From Lemma \ref{Pos-system}, it implies $z(t)\succeq0,\ \forall t\ge0$ or $\Phi(t,\varphi)\preceq\Phi(t,\overline{\varphi}),\ \forall t\ge0.$ The proof is complete.
\end{proof}
\begin{theorem} \label{Theorem-main}
    Consider the system \eqref{Eq-main}--\eqref{initial-con}. Assume that $A(t)$ is Metzler and
$B(t)$ is nonnegative for each $t\ge 0$. In addition, there  exist $a_0>0,\ p\in(0,1) $ satisfying
\begin{equation} \label{con-main} \max_{j\in\{1,\dots,d\}}\sum_{i=1}^da_{ij}(t)\le -a_0\ \ \text{and}\;\;\frac{\displaystyle\max_{j\in\{1,\dots,d\}}\sum_{i=1}^db_{ij}(t)}{\displaystyle\max_{j\in\{1,\dots,d\}}\sum_{i=1}^da_{ij}(t)}\ge -p
\end{equation} 
for all $t\ge0$. Then, for any $\varphi\in C([-\tau,0];\R^d)$, the solution $\Phi(\cdot,\varphi)$ converges to the origin, that is,
\[
\lim_{t\to \infty}\Phi(t,\varphi)=0.
\]
Furthermore, we can find a constant $\lambda>0$ such that
\begin{equation} \label{ets-main}
\|\Phi(t,\varphi)\|\le \bigg(\sup_{s\in [-\tau,0]}\|\varphi(s)\|\bigg)E_{\alpha}(-\lambda t^{\alpha})\ \text{for all}\; t\ge0.
\end{equation}  
\end{theorem}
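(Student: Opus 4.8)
The plan is to remove the sign indefiniteness of the solution by passing to a positive comparison system, derive a scalar fractional Halanay inequality for the $\ell^1$-norm of that comparison solution, and then invoke Theorem~\ref{Theorem-Hala}(ii). \emph{Step 1 (reduction to a positive comparison system).} I would introduce $|\varphi|:=(|\varphi_1|,\dots,|\varphi_d|)^{\rm T}\in C([-\tau,0];\R^d)$, which is nonnegative. Since $A(t)$ is Metzler and $B(t)$ nonnegative for each $t\ge0$, Lemma~\ref{Pos-system} gives $\Phi(\cdot,|\varphi|)\succeq0$ on $[0,\infty)$. As $-|\varphi|\preceq\varphi\preceq|\varphi|$ on $[-\tau,0]$, Lemma~\ref{Pro-order-pre} applied to the ordered pairs $(-|\varphi|,\varphi)$ and $(\varphi,|\varphi|)$, combined with the linearity of \eqref{Eq-main}--\eqref{initial-con} (so $\Phi(t,-|\varphi|)=-\Phi(t,|\varphi|)$), yields $-\Phi(t,|\varphi|)\preceq\Phi(t,\varphi)\preceq\Phi(t,|\varphi|)$ for all $t\ge0$; hence $|\Phi_i(t,\varphi)|\le\Phi_i(t,|\varphi|)$ for every $i$, and therefore $\|\Phi(t,\varphi)\|\le\|\Phi(t,|\varphi|)\|$.

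\emph{Step 2 (a scalar Halanay inequality).} I would set $v(t):=\Phi(t,|\varphi|)\succeq0$, $w(t):=\sum_{i=1}^{d}v_i(t)=\|v(t)\|\ge0$, and define the continuous functions $a(t):=-\max_{k}\sum_{i=1}^{d}a_{ik}(t)$ and $b(t):=\max_{k}\sum_{i=1}^{d}b_{ik}(t)$. The function $w$ inherits the regularity required in Theorem~\ref{Theorem-Hala}. Summing the $d$ coordinate equations of \eqref{Eq-main} and regrouping by columns,
$$^{\!C}D^{\alpha}_{0^+}w(t)=\sum_{j=1}^{d}\Big(\sum_{i=1}^{d}a_{ij}(t)\Big)v_j(t)+\sum_{j=1}^{d}\Big(\sum_{i=1}^{d}b_{ij}(t)\Big)v_j(t-q(t)).$$
Using $v_j(t)\ge0$ and $v_j(t-q(t))\ge0$ (by Lemma~\ref{Pos-system} when $t\ge q(t)$, and $=|\varphi_j(t-q(t))|\ge0$ otherwise), together with $\sum_i a_{ij}(t)\le-a(t)$ and $\sum_i b_{ij}(t)\le b(t)$, one gets
$$^{\!C}D^{\alpha}_{0^+}w(t)\le -a(t)\,w(t)+b(t)\sup_{t-q(t)\le s\le t}w(s),\qquad t>0.$$
Here $c\equiv0$; the first inequality in \eqref{con-main} gives $a(t)\ge a_0>0$; and multiplying the second inequality in \eqref{con-main} through by the negative number $\max_{k}\sum_i a_{ik}(t)$ converts it into $b(t)\le p\,a(t)$, i.e.\ $\sup_{t\ge0}b(t)/a(t)\le p<1$. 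Thus the hypotheses of Theorem~\ref{Theorem-Hala}(ii) are met.

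\emph{Step 3 (conclusion).} Theorem~\ref{Theorem-Hala}(ii) then produces $w_0=0$ (since $c^*=0$) and $\lambda^*>0$ with $w(t)\le\big(\sup_{s\in[-\tau,0]}|w(s)|\big)E_{\alpha}(-\lambda^*t^\alpha)$ for all $t\ge0$. Because $w(s)=\sum_{i}|\varphi_i(s)|=\|\varphi(s)\|$ on $[-\tau,0]$, Step~1 gives $\|\Phi(t,\varphi)\|\le w(t)\le\big(\sup_{s\in[-\tau,0]}\|\varphi(s)\|\big)E_{\alpha}(-\lambda^*t^\alpha)$, which is \eqref{ets-main} with $\lambda=\lambda^*$; and $\lim_{t\to\infty}E_{\alpha}(-\lambda^*t^\alpha)=0$ by Lemma~2.1(i) forces $\Phi(t,\varphi)\to0$.

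The column-sum rearrangement and the sign bookkeeping in Step~2 are routine. The only genuinely non-mechanical point is the choice in Step~1 of the componentwise modulus $|\varphi|$ as comparison datum rather than a constant majorant such as $\big(\sup_{s}\|\varphi(s)\|\big)(1,\dots,1)^{\rm T}$: the latter also closes the argument, but it inserts a spurious dimensional factor $d$ in front of $E_\alpha$, whereas $|\varphi|$ preserves the identity $w(s)=\|\varphi(s)\|$ on the history interval and thus delivers the sharp constant in \eqref{ets-main}.
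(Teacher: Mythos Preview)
Your proposal is correct and follows essentially the same route as the paper: a sandwich of $\Phi(\cdot,\varphi)$ between $\pm\Phi(\cdot,|\varphi|)$ via order preservation and linearity, followed by the column-sum reduction of $\|\Phi(\cdot,|\varphi|)\|$ to a scalar Halanay inequality and an appeal to Theorem~\ref{Theorem-Hala}(ii). The paper's own argument is organized as three separate cases (nonnegative, nonpositive, and general initial data) and uses $\varphi^{\pm}=\pm|\varphi|$ explicitly, eventually taking $\lambda=\min\{\lambda_1,\lambda_2\}$; your single-pass reduction via $|\varphi|$ is a cleaner packaging of exactly the same mechanism and avoids that (harmless) redundancy, since the decay rate from Theorem~\ref{Theorem-Hala} depends only on $a,b,q$ and not on the initial datum.
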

\begin{proof} 
\textbf{Case 1.} We first take the initial condition $\varphi(\cdot)\in C([-\tau,0];\R^d_{\geq 0})$ on $[-\tau,0].$ To simplify notation, we also denote $x(\cdot)=(x_1(\cdot),\dots,x_d(\cdot))^{\rm T}$ as the solution of system \eqref{Eq-main}--\eqref{initial-con}. By Lemma \ref{Pos-system}, we have $x_i(t)\ge 0$ for all $t\ge0$ and $i=1,\dots,d.$ 


Let
\[X(t):=x_1(t)+x_2(t)+\cdots+x_d(t),\;\forall t\in [-\tau,+\infty).\] 
It is easy to check that
\begin{align*}
    {^{\!C}D}^{{\alpha}}_{0^+}X(t)&={^{\!C}D}^{{\alpha}}_{0^+}x_1(t)+{^{\!C}D}^{{\alpha}}_{0^+}x_2(t)+\cdots+{^{\!C}D}^{{\alpha}}_{0^+}x_d(t)\\
    &\hspace{-1.3cm}=\sum_{j=1}^{d}a_{1j}(t)x_j(t)+\sum_{j=1}^{d}b_{1j}(t)x_j(t-q(t))+\cdots+\sum_{j=1}^{d}a_{dj}(t)x_j(t)+\sum_{j=1}^{d}b_{dj}(t)x_j(t-q(t))\\
    &\hspace{-1.3cm}=\sum_{i=1}^{d}a_{i1}(t)x_1(t)+\sum_{i=1}^{d}a_{i2}(t)x_2(t)+\cdots+\sum_{i=1}^{d}a_{id}(t)x_d(t)\\
    &+\sum_{i=1}^{d}b_{i1}(t)x_1(t-q(t))+\sum_{i=1}^{d}b_{i2}(t)x_2(t-q(t))+\cdots+\sum_{i=1}^{d}b_{id}(t)x_d(t-q(t))\\
    &\hspace{-1.3cm}\le\bigg(\max_{j\in\{1,\dots,d\}}\sum_{i=1}^da_{ij}(t)\bigg)X(t)+\bigg(\max_{j\in\{1,\dots,d\}}\sum_{i=1}^db_{ij}(t)\bigg)X(t-q(t)),\;\forall t>0.
\end{align*}
Let
\[a(t):=-\max_{j\in\{1,\dots,d\}}\sum_{i=1}^da_{ij}(t)\ \text{and}\ b(t):=\max_{j\in\{1,\dots,d\}}\sum_{i=1}^db_{ij}(t)\]
for all $t\geq 0$. It follows from the assumption \eqref{con-main} that $a(t)$ and $b(t)$ satisfy the condition (ii) in Theorem \ref{Theorem-Hala}. This leads to that there exists a $\lambda>0$ such that
\begin{equation}\label{ul1}
0\le X(t)\le \bigg(\sup_{s\in [-\tau,0]}\|\varphi(s)\|\bigg)E_{\alpha}(-\lambda t^{\alpha})\ \text{for all} \ t\ge0.
\end{equation} 

\textbf{Case 2.} Next, let $\varphi(\cdot)\in C([-\tau,0];\R^d_{\leq 0})$. Put $z(t):=-x(t),\ t\ge-\tau$. Then,
\begin{align*}
^{\!C}D^{{\alpha}}_{0^+}z(t)&=-^{\!C}D^{{\alpha}}_{0^+}x(t)=-\bigg(A(t)x(t)+B(t)x(t-q(t))\bigg)\\
&=A(t)z(t)+B(t)z(t-q(t)),\ \forall t>0,\\
z(s)&=-x(s)=-{\varphi}(s)\succeq0,\ \forall s\in[-\tau,0].
    \end{align*}
As shown in {\bf Case 1}, there is a $\lambda>0$ satisfying
\begin{equation*}
0\le z_i(t)\le \bigg(\sup_{s\in [-\tau,0]}\|\varphi(s)\|\bigg)E_{\alpha}(-\lambda t^{\alpha})\ \text{for all} \ t\ge0 \ \text{and}\ i=1,\dots,d,
\end{equation*} 
or
\begin{equation}\label{ul2} 
-\bigg(\sup_{s\in [-\tau,0]}\|\varphi(s)\|\bigg)E_{\alpha}(-\lambda t^{\alpha})\le x_i(t)\le 0\ \text{for all} \ t\ge0 \ \text{and}\ i=1,\dots,d.
\end{equation} 
\textbf{Case 3.} Finally, we consider $\varphi(\cdot)\in C([-\tau,0];\R^d)$. Define $\varphi^+(s):=(\varphi_1^+(s),\dots,\varphi_d^+(s))^{\rm T}$ and $\varphi^-(s):=(\varphi_1^-(s),\dots,\varphi_d^-(s))^{\rm T}$, where, for $i=1,\dots,d$ and $s\in[-\tau,0]$,
\begin{equation*} \varphi^+_i(s)=\begin{cases} \varphi_i(s) &\text{if} \ \varphi_i(s)\geq0,\\ -\varphi_i(s) &\text{if} \ \varphi_i(s)<0,
\end{cases} \ \text{and}\ \varphi_i^-(s)=\begin{cases} \varphi_i(s) &\text{if} \ \varphi(s)\leq 0,\\ -\varphi_i(s) &\text{if} \ \varphi_i(s)>0.
\end{cases}\end{equation*} 
Then, $\varphi^+(\cdot)\in C([-\tau,0];\R^d_{\geq 0}),\ \varphi^-(\cdot)\in C([-\tau,0];\R^d_{\leq 0})$ and 
\[\varphi^-(s)\preceq\varphi(s)\preceq\varphi^+(s)\ \text{for all} \  s\in[-\tau,0].\]
 From Lemma \ref{Pro-order-pre}, we see 
 \begin{equation} \label{est-solu}
 \Phi(t,\varphi^-)\preceq\Phi(t,\varphi)\preceq\Phi(t,\varphi^+) \ \text{for all} \  t\ge0.
 \end{equation}
 Furthermore, from \eqref{ul1} and \eqref{ul2}, we can find $\lambda_1,\lambda_2>0$ satisfying
\begin{align} 
0&\le \Phi_i(t,\varphi^+)\le \bigg(\sup_{s\in [-\tau,0]}\|\varphi^+(s)\|\bigg)E_{\alpha}(-\lambda_1 t^{\alpha})=\bigg(\sup_{s\in [-\tau,0]}\|\varphi(s)\|\bigg)E_{\alpha}(-\lambda_1t^{\alpha}),\label{est_sign1} \\
0&\ge \Phi_i(t,\varphi^-)\ge -\bigg(\sup_{s\in [-\tau,0]}\|\varphi^-(s)\|\bigg)E_{\alpha}(-\lambda_2 t^{\alpha})=-\bigg(\sup_{s\in [-\tau,0]}\|\varphi(s)\|\bigg)E_{\alpha}(-\lambda_2 t^{\alpha}),\label{est_sign2}
\end{align} 
for all $t\ge0$ và $i=1,\dots,d$. By combining \eqref{est-solu}, \eqref{est_sign1} and \eqref{est_sign2}, it leads to  
\begin{equation*} 
-\bigg(\sup_{s\in [-\tau,0]}\|\varphi(s)\|\bigg)E_{\alpha}(-\lambda_2 t^{\alpha})\le \Phi_i(t,\varphi)\le \bigg(\sup_{s\in [-\tau,0]}\|\varphi(s)\|\bigg)E_{\alpha}(-\lambda_1 t^{\alpha})
\end{equation*} 
for all $t\ge0$ và $i=1,\dots,d$, and thus the estimate \eqref{ets-main} is verified with the parameter $\lambda:=\min\{\lambda_1,\lambda_2\}$. In particular, for any $\varphi(\cdot)\in C([-\tau,0];\R^d)$, then 
\[
\lim_{t\to\infty}\Phi(t,\varphi)=0,
\]
which finishes the proof.
\end{proof}
\begin{remark} \label{Remark-bounded}
    Consider the system  \eqref{Eq-main}--\eqref{initial-con}. Suppose that the following assumptions hold.
    \begin{itemize}
        \item[(R1)] $-\displaystyle\max_{j\in\{1,\dots,d\}}\sum_{i=1}^da_{ij}(t)$ is bounded from above on $[0,\infty)$.
        \item [(R2)] $\displaystyle\sup_{t\geq 0}\{\max_{j\in\{1,\dots,d\}}\sum_{i=1}^d a_{ij}(t)+\max_{j\in\{1,\dots,d\}}\sum_{i=1}^db_{ij}(t)\}\le -\sigma$
        with some positive constant $\sigma$.
\end{itemize}
Then, by Theorem \ref{Theorem-Hala}, the conclusions of Theorem \ref{Theorem-main} are still true. 
\end{remark}
\begin{remark}
  Although also established in the class of positive systems like Theorems 4.5, 4.6 in \cite{Thinh-Tuan}, Theorem \ref{Theorem-main} in the current paper provides a new criterion to study the asymptotic behavior of solutions with arbitrary initial conditions.  Indeed, compared to  \cite[Theorem 4.5]{Thinh-Tuan}, Theorem \ref{Theorem-main} does not require the boundedness of the coefficient matrices or the Hurwitz characteristic of the dominant system. Meanwhile, compared to \cite[Theorem 4.5]{Thinh-Tuan}, it is significantly simpler and even allows conclusions about the stability of the systems without having to solve additional supporting inequalities. In section 4, we will show specific numerical examples to clarify these findings.
\end{remark}
\subsection{General fractional-order delay linear systems}
This section deals with general fractional-order delay linear systems. Based on the Halanay inequality established in Theorem \ref{Theorem-Hala}, a linear matrix inequality has been designed to ensure their Mittag-Lefler stability.

Consider the system 
\begin{align}
^{\!C}D^{{\alpha}}_{0^+}x(t)&=A(t)x(t)+B(t)x(t-q(t)),\, \forall t>0,  \label{eq_moi1}\\
		x(t)&=\varphi(t),\; \forall t\in [-\tau,0]. \label{eq_moi2}
\end{align}
Here, $A(\cdot),B(\cdot): [0,\infty)\rightarrow \R^d$ are continuous, $\tau>0$, $q(\cdot):[0,\infty)\rightarrow [0,\tau]$ is a continuous delay function, and $\varphi\in C([-\tau,0];\R^d)$ is an arbitrary initial condition.
\begin{lemma} \cite[Theorem 2]{TuanIET18} \label{Ine-Caputo}
    Let $x:[0,+\infty)\rightarrow \R^d$ is continuous and the Caputo fractional derivative $^{\!C}D^{{\alpha}}_{0^+}x(\cdot)$ exists on $(0,\infty)$. Then, for any $t\ge0$, we have
    \[^{\!C}D^{{\alpha}}_{0^+}\left[x^{\rm T}(t)x(t)\right]\le 2x^{\rm T}(t)^{\!C}D^{{\alpha}}_{0^+}x(t).\]
\end{lemma}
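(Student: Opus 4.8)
This is the classical fractional quadratic-form inequality, and the plan is to reproduce its standard proof under the hypotheses stated here. The case $\alpha=1$ is just the product rule (both sides equal $2x^{\rm T}(t)\dot x(t)$), so assume $\alpha\in(0,1)$. I would first treat smooth data $x\in C^{1}([0,\infty);\R^{d})$, for which the Caputo derivative admits the kernel representation ${}^{\!C}D^{\alpha}_{0^+}y(t)=\frac{1}{\Gamma(1-\alpha)}\int_{0}^{t}(t-s)^{-\alpha}\dot y(s)\,ds$. Applying this to $y=x$ and, separately, to $y(\cdot)=x^{\rm T}(\cdot)x(\cdot)$ (whose ordinary derivative is $2x^{\rm T}(s)\dot x(s)$), and subtracting, yields the identity
\[
2x^{\rm T}(t)\,{}^{\!C}D^{\alpha}_{0^+}x(t)-{}^{\!C}D^{\alpha}_{0^+}\!\left[x^{\rm T}(t)x(t)\right]=\frac{2}{\Gamma(1-\alpha)}\int_{0}^{t}(t-s)^{-\alpha}\,\big(x(t)-x(s)\big)^{\rm T}\dot x(s)\,ds .
\]

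Next I would fix $t>0$, set $y(s):=x(t)-x(s)$ (so $y(0)=x(t)-x(0)$ and $\dot y(s)=-\dot x(s)$), and observe that $\big(x(t)-x(s)\big)^{\rm T}\dot x(s)=-\tfrac12\frac{d}{ds}\big(y^{\rm T}(s)y(s)\big)$. Substituting this into the identity and integrating by parts with $u=(t-s)^{-\alpha}$, $dv=\frac{d}{ds}\big(y^{\rm T}(s)y(s)\big)\,ds$ gives
\[
2x^{\rm T}(t)\,{}^{\!C}D^{\alpha}_{0^+}x(t)-{}^{\!C}D^{\alpha}_{0^+}\!\left[x^{\rm T}(t)x(t)\right]=\frac{1}{\Gamma(1-\alpha)}\left[\,t^{-\alpha}\,y^{\rm T}(0)y(0)+\alpha\int_{0}^{t}(t-s)^{-\alpha-1}\,y^{\rm T}(s)y(s)\,ds\,\right].
\]
The boundary contribution at $s=t$ vanishes because $y^{\rm T}(s)y(s)=O\big((t-s)^{2}\big)$ for $C^{1}$ data while $(t-s)^{-\alpha}$ grows only like $(t-s)^{-\alpha}$ with $\alpha<1$, so the product is $O\big((t-s)^{2-\alpha}\big)\to0$ (the same estimate makes the remaining integral convergent); the contribution at $s=0$ is exactly the nonnegative term $t^{-\alpha}(x(t)-x(0))^{\rm T}(x(t)-x(0))$. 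Since $\Gamma(1-\alpha)>0$ and both summands on the right are nonnegative, the inequality follows for all $x\in C^{1}$.

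Finally, to remove the smoothness assumption and only assume $x$ continuous with ${}^{\!C}D^{\alpha}_{0^+}x$ existing on $(0,\infty)$, I would work directly from the definition ${}^{\!C}D^{\alpha}_{0^+}x(t)=\frac{d}{dt}I^{1-\alpha}_{0^+}\big(x(t)-x(0)\big)$: rewrite $2x^{\rm T}(t)\,{}^{\!C}D^{\alpha}_{0^+}x(t)-{}^{\!C}D^{\alpha}_{0^+}[x^{\rm T}(t)x(t)]$ as $\frac{d}{dt}$ applied to an explicit integral, in which the integration by parts above can be carried out in the class of absolutely continuous functions, reducing again to a manifestly nonnegative expression; alternatively, one can approximate $x$ by smooth $x_{n}$ (extend and mollify), apply the $C^{1}$ case to each $x_{n}$, and pass to the limit using continuity of $I^{1-\alpha}_{0^+}$ and of the integral operators involved. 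I expect this last step to be the main obstacle — justifying the differentiation under the integral sign, the integration by parts for merely absolutely continuous integrands, and the behavior of the boundary term at $s=t$ all need care — whereas the computation in the smooth case is short and transparent.
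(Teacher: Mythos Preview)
The paper does not supply its own proof of this lemma; it is stated with a citation to \cite{TuanIET18} and used as a black box. So there is nothing in the paper to compare your argument against, and I can only assess the proposal on its own merits.

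Your $C^{1}$ computation is correct and is exactly the classical argument (due, in this form, to Aguila-Camacho, Duarte-Mermoud and Gallegos). The identity, the substitution $y(s)=x(t)-x(s)$, the integration by parts, and the handling of the boundary term at $s=t$ are all right.

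The issue is the final step. The whole point of the cited result \cite{TuanIET18} is precisely to relax the $C^{1}$ (or absolute-continuity) hypothesis to mere continuity of $x$ together with existence of ${}^{\!C}D^{\alpha}_{0^+}x$ in the sense $\frac{d}{dt}I^{1-\alpha}_{0^+}(x-x(0))$; the smooth case was already known. Your mollification route, as written, does not close: uniform convergence $x_{n}\to x$ does not in general imply convergence of ${}^{\!C}D^{\alpha}_{0^+}x_{n}$ to ${}^{\!C}D^{\alpha}_{0^+}x$ (the operator involves a genuine differentiation), so passing to the limit in the inequality is not justified without an additional argument controlling the fractional derivatives. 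The direct route you mention --- writing the difference as $\frac{d}{dt}$ of an explicit integral built from $I^{1-\alpha}_{0^+}$ and showing that integral is nondecreasing --- is the correct strategy and is essentially what \cite{TuanIET18} does, but it requires more than a sentence: one has to produce the explicit nonnegative integrand \emph{before} differentiating (so that monotonicity, not a pointwise identity after differentiation, gives the sign), and one must check that ${}^{\!C}D^{\alpha}_{0^+}[x^{\rm T}x]$ actually exists under the stated hypotheses. You flag these as obstacles, which is honest, but as a proof the proposal is incomplete exactly at the step that carries the content of the lemma.
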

\begin{theorem} \label{Theorem-LMI}
Consider the system \eqref{eq_moi1}--\eqref{eq_moi2}. Suppose that there exist two nonnegative continuous functions
	$\gamma(\cdot),\;\sigma(\cdot):[0,\infty)\rightarrow \R_{\geq 0}$ such that the following linear matrix inequality is satisfied
 \begin{equation} \label{LMI}
	\begin{pmatrix}	[A(t)]^T+A(t)+\gamma(t) I_d & B(t) \\
		[B(t)]^T & -\sigma(t) I_d
	\end{pmatrix}\leq 0,\ \forall t\geq 0,
\end{equation} 
where $I_d$ is the identity matrix in $\R^{d\times d}$. In addition, 
\begin{equation} \label{con-theorem2}
    \gamma(t)\ge a_0>0, \ \forall t\ge 0,\;\text{and} \  \displaystyle\sup_{t\ge0}\frac{\sigma(t)}{\gamma(t)}\le p<1.
\end{equation} Then, there exists a  positive parameter $\lambda>0$ satisfying
	\[\|\Phi(t,\varphi)\|\le \sqrt{\sup_{s\in [-\tau,0]}\|\varphi^{\rm T}(s)\varphi(s)\|}\sqrt{E_{\alpha}(-\lambda t^{\alpha})},\ \forall t\ge 0.\]
\end{theorem}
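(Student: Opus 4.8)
The plan is to run a quadratic Lyapunov argument that collapses the vector problem \eqref{eq_moi1}--\eqref{eq_moi2} onto the scalar fractional Halanay inequality of Theorem~\ref{Theorem-Hala}(ii). Write $x(\cdot)=\Phi(\cdot,\varphi)$ for the unique global solution and set $V(t):=x^{\rm T}(t)x(t)$ for $t\ge-\tau$; this $V$ is continuous and nonnegative on $[-\tau,+\infty)$, $V(s)=\varphi^{\rm T}(s)\varphi(s)$ on $[-\tau,0]$, and by Lemma~\ref{Ine-Caputo},
\[
{}^{\!C}D^{\alpha}_{0^+}V(t)\le 2x^{\rm T}(t)\,{}^{\!C}D^{\alpha}_{0^+}x(t)=x^{\rm T}(t)\big(A(t)+A^{\rm T}(t)\big)x(t)+2x^{\rm T}(t)B(t)x(t-q(t)),\qquad t>0.
\]

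The decisive step is to test the LMI \eqref{LMI} against the stacked vector $\xi(t):=\big(x^{\rm T}(t),\,x^{\rm T}(t-q(t))\big)^{\rm T}\in\R^{2d}$: the inequality $\xi^{\rm T}(t)\,(\cdot)\,\xi(t)\le 0$ rearranges to
\[
x^{\rm T}(t)\big(A(t)+A^{\rm T}(t)\big)x(t)+2x^{\rm T}(t)B(t)x(t-q(t))\le -\gamma(t)V(t)+\sigma(t)V(t-q(t)).
\]
Combining the two displays and using $V\ge 0$ gives ${}^{\!C}D^{\alpha}_{0^+}V(t)\le -\gamma(t)V(t)+\sigma(t)\sup_{t-q(t)\le s\le t}V(s)$ for $t>0$. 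This is exactly the hypothesis of Theorem~\ref{Theorem-Hala} with $w=V$, $a=\gamma$, $b=\sigma$, $c\equiv 0$ (hence $c^*=0$), and the same delay $q$; condition (ii) there is precisely \eqref{con-theorem2}, namely $\gamma(t)\ge a_0>0$ and $\sup_{t\ge 0}\sigma(t)/\gamma(t)\le p<1$. Theorem~\ref{Theorem-Hala} then produces $\lambda:=\lambda^*>0$ and $w_0=c^*/((1-p)a_0)=0$ with $V(t)\le\big(\sup_{s\in[-\tau,0]}|\varphi^{\rm T}(s)\varphi(s)|\big)E_{\alpha}(-\lambda t^{\alpha})$ for all $t\ge 0$. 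Taking square roots, and reading $\|\cdot\|$ through $\sqrt{x^{\rm T}x}$ as the form of the claimed bound indicates, yields the assertion.

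I do not expect a genuine obstacle: the content is the one-line reduction obtained by contracting the LMI with $\xi$. The points that need a little bookkeeping are (a) justifying that ${}^{\!C}D^{\alpha}_{0^+}V$ exists on $(0,+\infty)$ so that both Lemma~\ref{Ine-Caputo} and Theorem~\ref{Theorem-Hala} apply, which follows from the continuity of $x$ and of ${}^{\!C}D^{\alpha}_{0^+}x$ together with the polynomial dependence of $V$ on $x$; (b) handling times $t$ with $t\le q(t)$, where $V(t-q(t))$ is the initial datum $\varphi^{\rm T}(t-q(t))\varphi(t-q(t))$ and is still absorbed into the $\sup_{t-q(t)\le s\le t}V(s)$ term demanded by Theorem~\ref{Theorem-Hala}; and (c) noting that $\gamma,\sigma\ge0$ continuous are admissible coefficients there. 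Everything substantive sits in the LMI-to-Halanay passage.
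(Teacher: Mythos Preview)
Your proposal is correct and follows essentially the same approach as the paper: define the quadratic Lyapunov function $V(t)=x^{\rm T}(t)x(t)$, combine Lemma~\ref{Ine-Caputo} with the LMI \eqref{LMI} tested against the stacked vector $(x(t),x(t-q(t)))$ to obtain the scalar Halanay-type inequality, and then invoke Theorem~\ref{Theorem-Hala}(ii) with $c\equiv 0$. The only difference is cosmetic---the paper writes the LMI step as a single chain of inequalities rather than isolating the $\xi^{\rm T}(\cdot)\xi$ contraction---and your bookkeeping remarks (a)--(c) are reasonable clarifications that the paper leaves implicit.
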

\begin{proof}
Let $x(\cdot):[-\tau,\infty)\rightarrow \R^d$ be the solution of the system \eqref{eq_moi1}--\eqref{eq_moi2}. Denote $W(t):=x^{\rm T}(t)x(t),\ \forall t\ge -\tau$, then $W(\cdot)$ is a continuous, nonnegative function on $[-\tau,+\infty)$. Using Lemma \ref{Ine-Caputo} and the condition \eqref{LMI}, we have
\begin{align*}
	^{\!C}D^{\alpha}_{0^+}&W(t)+\gamma(t) W(t)-\sigma(t)\sup_{t-q(t)\le s\le t}W(s)\\
	&\le2x^{\rm T}(t)^{\!C}D^{\alpha}_{0^+}x(t)+\gamma(t) x^{\rm T}(t)x(t)-\sigma(t) x^{\rm T}(t-q(t))x(t-q(t))\\
	&=2x^{\rm T}(t)\left[A(t)x(t)+B(t)x(t-q(t))\right]+\gamma(t) x^{\rm T}(t)x(t)-\sigma(t) x^{\rm T}(t-q(t))x(t-q(t))\\
	&=\begin{pmatrix}
		x^{\rm T}(t) & x^{\rm T}(t-q(t))
	\end{pmatrix} \begin{pmatrix}
		[A(t)]^{\rm T}+A(t)+\gamma(t) I_d & B(t) \\
		[B(t)]^{\rm T} & -\sigma(t) I_d
	\end{pmatrix} \begin{pmatrix}
	x(t) \\ x(t-q(t)) 
\end{pmatrix} \\
&\leq 0, \;\;\forall t>0.
\end{align*}
It follows from Theorem \ref{Theorem-Hala} (due to the functions $\gamma(\cdot)$ and $\sigma(\cdot)$ verify the condition \eqref{con-theorem2}) that there is a $\lambda>0$ so that
\[W(t)\le \sup_{s\in [-\tau,0]}\|\varphi^{\rm T}(s)\varphi(s)\|{E_{\alpha}(-\lambda t^{\alpha})},\ \forall t\ge 0.\]
This implies that
\[\|\Phi(t,\varphi)\|\le \sqrt{\sup_{s\in [-\tau,0]}\|\varphi^{\rm T}(s)\varphi(s)\|}\sqrt{E_{\alpha}(-\lambda t^{\alpha})},\ \forall t\ge 0.\]
The proof is complete.
\end{proof}
\begin{remark}
Theorem \ref{Theorem-LMI} is a significant extension of \cite[Proposition 2]{He1}. Furthermore, the convergence rate of the solutions to the origin is also discussed in this result.    
\end{remark}
\begin{remark}
Theorem \ref{Theorem-LMI} is a constructive result. It suggests combining a fractional Halanay inequality with the design of suitable linear matrix inequalities to derive various stability conditions of general delay linear systems.    
\end{remark}
\begin{remark}
    Because the norms on $\R^d$ are equivalent, the correctness of the conclusions in Theorem \ref{Theorem-main} and Theorem \ref{Theorem-LMI} on the asymptotic stability of the systems and the convergence rate of solutions to the origin does not depend on the defined norm. 
\end{remark}
\section{Numerical examples}
This section provides numerical examples to illustrate the validity of the proposed theoretical results.
\begin{example} \label{Example1}
Consider the system
\begin{align}
^{\!C}D^{\alpha}_{0+}x(t)&=-A(t)x(t)+B(t)x(t-q(t)),\; t\in (0,\infty), \label{ex1}\\
y(s)&=\varphi(s),\ s\in[-\tau,0], \label{ex1-cond}
	\end{align}
\end{example}
where $\alpha=0.45$, $\varphi\in C([-\tau,0],\mathbb \R^d)$,
	\begin{align*}
		A(t)&=	\begin{pmatrix}
			-0.7-\displaystyle\frac{1}{\sqrt{1+t}}-0.005t & 1-\displaystyle\frac{1}{\sqrt{1+t}}&0.3+0.2\sin t  \\
			0.1+0.003t & -3-\displaystyle\frac{0.8}{1+t}-0.003t&0.15+0.001t\\
   		0.4+\displaystyle\frac{1}{\sqrt{1+t}} & 1+\displaystyle\frac{0.8}{1+t}+0.001t&-1-0.004t
		\end{pmatrix},\ t\ge0, \\
		B(t)&=	\begin{pmatrix}
			\displaystyle\frac{0.002t^2\sin^2t}{1+t^2} & 0.0015t  & 0\\
			0.0005t & 0.05+\displaystyle\frac{0.1}{2+t}& 0.001t \\0.1 & 0.05-\displaystyle\frac{0.1}{2+t}&\displaystyle\frac{0.12}{3+t}
		\end{pmatrix},\ t\ge 0,
	\end{align*}
and the delay $$q(t)=2-\cos^4t,\ t\geq 0.$$
It is obvious that $\tau=2$. By a simple calculation, we obtain
\begin{align*}
    \max_{j\in\{1,2,3\}}\sum_{i=1}^3a_{ij}(t)&=\max\{-0.2-0.002t,\ -1-0.002t-\displaystyle\frac{1}{\sqrt{1+t}},\ -0.55+0.2\sin t-0.003t\}\\
    &= -0.2-0.002t,\ \forall t\ge 0,\\
    \max_{j\in\{1,2,3\}}\sum_{i=1}^3b_{ij}(t)&=\max\{0.1+0.0005t+\displaystyle\frac{0.002t^2\sin^2t}{1+t^2},\ 0.1+0.0015t,\ 0.001t+\frac{0.12}{3+t}\}\\
    &=0.0015t+0.1,\ \forall t\ge 0.
\end{align*}
This leads to
\begin{align*}
 \max_{j\in\{1,2,3\}}\sum_{i=1}^3a_{ij}(t)&\le-0.2,\ \forall t\ge0,\\
   \frac{\displaystyle\max_{j\in\{1,2,3\}}\sum_{i=1}^3b_{ij}(t)}{\displaystyle\max_{j\in\{1,2,3\}}\sum_{i=1}^3a_{ij}(t)}&=-\frac{0.0015t+0.1}{0.002t+0.2}\ge -0.75,\ \forall t\ge0.
\end{align*}
Thus, the assumptions in Theorem \ref{Theorem-main} are satisfied. From this, for any $\varphi\in C([-2,0];\R^3)$, the solution $\Phi(\cdot,\varphi)$ of the initial value problem \eqref{ex1}--\eqref{ex1-cond} converges to the origin. Choosing 
\[a(t):=0.2+0.002t,\ b(t):=0.1+0.0015t,\ \forall t\ge0.\]
It is easy to check that for $\lambda=0.075$, we have
\begin{align*}
    \lambda-a(t)+\frac{b(t)}{E_{\alpha}(-\lambda q^\alpha(t))}&=-0.125-0.002t+\frac{0.1+0.0015t}{E_{0.45}(-0.075 q^{0.45}(t))}\\
    &\le-0.125-0.002t+\frac{0.1+0.0015t}{E_{0.45}(-0.075\times 2^{0.45})}\\
    &< -0.125-0.002t+\frac{0.1+0.0015t}{0.8}\\
    &=\frac{-0.0001t}{0.8}\\
    &\le0,\ \forall t\ge0.
\end{align*}
Taking 
       $$\varphi(s):=\begin{pmatrix}
        0.2-0.4\cos s \\ 0.1+0.1s \\ \log({s+3})-0.5
\end{pmatrix},\  s\in[-2,0].$$ Because $\displaystyle\sup_{s\in[-2,0]}\|\varphi(s)\|=1.2$, Theorem \ref{Theorem-main} points out that
\[\|\Phi(t,\varphi)\|\le 1.2E_{0.45}(-0.075 t^{0.45}),\ t\ge0.\]
	\begin{figure}
		\begin{center}
			\includegraphics[scale=.7]{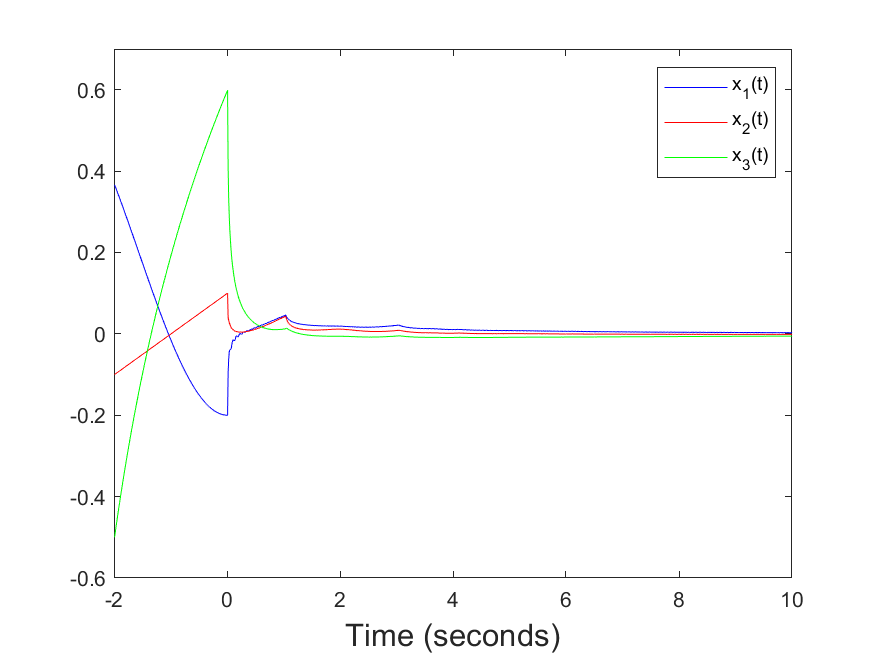}
		\end{center}
		\begin{center}
		\caption{Orbits of the solution of the system \eqref{ex1} with the initial condition $\varphi(s)=(
        0.2-0.4\cos s,0.1+0.1s,\log({s+3})-0.5)^{\rm T}$ on $[-2,0]$.}
		\end{center}
	\end{figure}
 \begin{remark}
     In Example \ref{Example1} above, because the coefficients $a_{ij}(\cdot)$ and $b_{ij}(\cdot)$ are unbounded on $[0,\infty)$, it is outside the scope of \cite[Theorem 4.5]{Thinh-Tuan}. On the other hand, it is extremely complicated to find parameters $\gamma>0$ and $w=(w_1,w_2,w_3)^{\rm T}\in \R^3_+$ that satisfy the following inequalities for all $t\geq 0$:
      \begin{equation*}
		\begin{cases} \label{bpt_1}
			\left(-0.7-\displaystyle\frac{1}{\sqrt{1+t}}-0.005t\right)w_1+\left(1-\displaystyle\frac{1}{\sqrt{1+t}}\right)w_2+\left(0.3+0.2\sin t\right)w_3\\
\hspace{0.5cm}+\displaystyle\frac{0.002t^2\sin^2t}{1+t^2}\frac{w_1}{E_{0.45}(-\gamma2^{0.45})}+\displaystyle \frac{0.0015tw_2}{E_{0.45}(-\gamma2^{0.45})}\hspace{4.2cm}\le-w_1\gamma, \\
\left(0.1+0.003t\right)w_1+\left(-3-\displaystyle\frac{0.8}{1+t}-0.003t\right)w_2+\left(0.15+0.001t\right)w_3\\
   \hspace{0.5cm}+\displaystyle\frac{(0.0005t)w_1}{E_{0.45}(-\gamma2^{0.45})}+\left(0.05+\displaystyle\frac{0.1}{2+t}\right) \frac{w_2}{E_{0.45}(-\gamma2^{0.45})}+\displaystyle\frac{(0.001t)w_3}{E_{0.45}(-\gamma2^{0.45})}\hspace{0.85cm}\le-w_2\gamma, \\
   \left(0.4+\displaystyle\frac{1}{\sqrt{1+t}}\right)w_1+\left(1+\displaystyle\frac{0.8}{1+t}+0.001t\right)w_2+\left(-1-0.004t\right)w_3\\
  \hspace{0.5cm}+\displaystyle\frac{0.1w_1}{E_{0.45}(-\gamma2^{0.45})}+\left(0.05-\displaystyle\frac{0.1}{2+t}\right) \frac{w_2}{E_{0.45}(-\gamma2^{0.45})}+\displaystyle\frac{0.12}{3+t}\frac{w_3}{E_{0.45}(-\gamma2^{0.45})}\le-w_3\gamma.
		\end{cases}
	\end{equation*}
 Therefore, it is not an easy task to test the asymptotic stability and estimate the convergence rate to the origin of the solutions of system \eqref{ex1}--\eqref{ex1-cond} by using \cite[Theorem 4.6]{Thinh-Tuan}.
 \end{remark}
\begin{example} \label{Example2}
    	Consider the system
    \begin{align}
^{\!C}D^{\alpha}_{0+}x(t)&=-A(t)x(t)+B(t)x(t-q(t)),\; t\in (0,\infty), \label{ex2}\\
x(s)&=\varphi(s),\ s\in[-\tau,0], \label{ex2-cond}
	\end{align}
	where $\alpha=0.75$, 
	\begin{align*}
		A(t)=	\begin{pmatrix}
			-3-\displaystyle\frac{1}{\sqrt{1+t}} & 5-\displaystyle\frac{1}{\sqrt{1+t}}  \\
			0.2+\displaystyle\frac{1}{1+t} & -6.6-\displaystyle\frac{0.2}{\sqrt{1+t}} 
		\end{pmatrix},\
		B(t)=	\begin{pmatrix}
			\displaystyle\frac{t\sin^2t}{1+t^2} & 1.15+\displaystyle\frac{0.1}{2+t}  \\
			1.5 & 0.1+\displaystyle\frac{0.2}{2+t}  
		\end{pmatrix},\
	\end{align*}
	and the delay $$q(t)=\frac{1+e^{-t}}{2},\ t\geq 0.$$
 	We see that $\tau=1$ and
 \begin{align*}
     \max_{j\in\{1,2\}}\sum_{i=1}^2a_{ij}(t)&=\max\{-2.8-\displaystyle\frac{1}{\sqrt{1+t}}+\displaystyle\frac{1}{1+t},-1.6-\displaystyle\frac{1.2}{\sqrt{1+t}}\}\\
     &=-1.6-\displaystyle\frac{1.2}{\sqrt{1+t}},\\
     \max_{j\in\{1,2\}}\sum_{i=1}^2b_{ij}(t)&=\max\{1.5+\displaystyle\frac{{t}\sin^2t}{1+t^2};1.25+\displaystyle\frac{0.3}{2+t}\}\\
&=1.5+\displaystyle\frac{t\sin^2t}{1+t^2}.
 \end{align*}
 It easy to check that $\displaystyle\max_{j\in\{1,2\}}\sum_{i=1}^2a_{ij}(t)$ is bounded on $[0,+\infty)$, and
 \begin{align*}\max_{j\in\{1,2\}}\sum_{i=1}^2a_{ij}(t)+\max_{j\in\{1,2\}}\sum_{i=1}^2b_{ij}(t)&=-0.1-\displaystyle\frac{1.2}{\sqrt{1+t}}+\displaystyle\frac{{t}\sin^2t}{1+t^2}\\
 &<-0.1+\displaystyle\frac{{t}}{1+t^2}-\displaystyle\frac{1}{\sqrt{1+t}} \\
 &<-0.1,\ \forall t\ge 0.
 \end{align*}
By Remark \ref{Remark-bounded}, for any $\varphi\in C([-1,0];\R^2)$, the solution $\Phi(\cdot,\varphi)$ of \eqref{ex2} converges to the origin. Taking
\[a(t)=1.6+\displaystyle\frac{1.2}{\sqrt{1+t}},\ b(t)=1.5+\displaystyle\frac{t\sin^2t}{1+t^2},\ t\ge0,\]
and choosing $\lambda=0.02$, we observe
\begin{align*}
    \lambda-a(t)+\frac{b(t)}{E_{\alpha}(-\lambda q^\alpha(t))}&=-1.58-\displaystyle\frac{1.2}{\sqrt{1+t}}+\frac{1.5+\displaystyle\frac{t\sin^2t}{1+t^2}}{E_{0.75}(-0.02q^{0.75}(t))}\\
    &\le-1.58-\displaystyle\frac{1.2}{\sqrt{1+t}}+\frac{1.5+\displaystyle\frac{t}{1+t^2}}{E_{0.75}(-0.02 )}\\
    &<-1.58-\displaystyle\frac{1.2}{\sqrt{1+t}}+\frac{1.5+\displaystyle\frac{t}{1+t^2}}{0.97}\\
    &=-\frac{0.0326}{0.97}+\frac{1}{0.97}\left(\frac{t}{1+t^2}-\frac{1.164}{\sqrt{1+t}}\right)\\
    &<0,\ \forall t\ge0.
\end{align*}
Thus, by Theorem \ref{Theorem-main}, we obtain the estimate $$\|\Phi(t,\varphi)\|\leq \sup_{s\in [-1,0]}\|\varphi(s)\|E_{0.75}(-0.02 t^{0.75}),\;\forall t\geq 0.$$
Figure 2 describes the trajectories of the solution of the initial value problem \eqref{ex2}--\eqref{ex2-cond} with $\varphi(s)=(
        0.3+0.4\sin s,0.1+0.5s)^{\rm T}$ on $[-1,0]$.
	\begin{figure}
		\begin{center}
			\includegraphics[scale=.7]{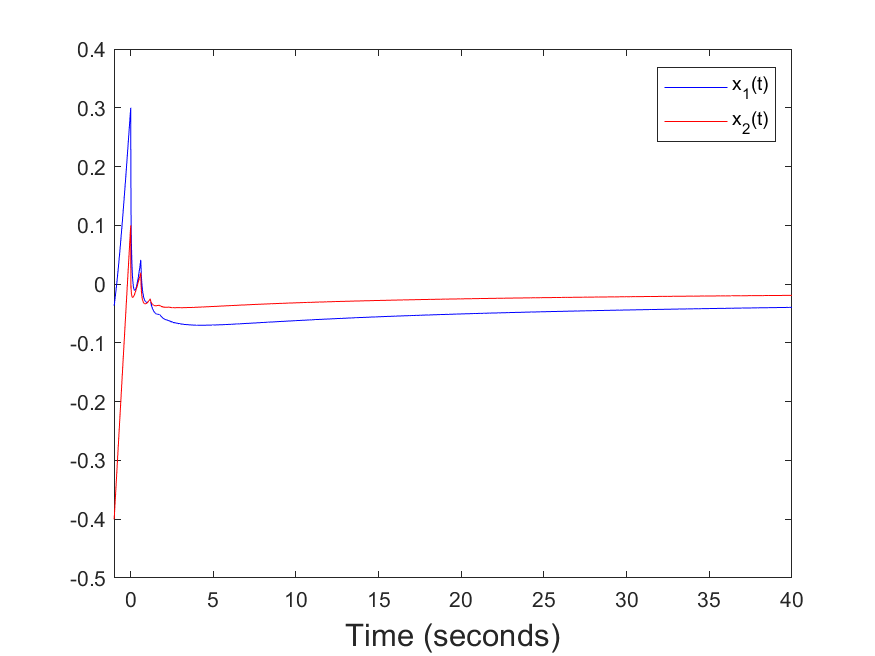}
		\end{center}
		\begin{center}
		\caption{Orbits of the solution of the system \eqref{ex2} with the initial condition $\varphi(s)=(
        0.3+0.4\sin s,0.1+0.5s)^{\rm T}$ on $[-1,0]$.}
		\end{center}
	\end{figure}
\end{example}
\begin{remark}
    	In Example \ref{Example2}, we have
	\begin{align*}
		A(t)\preceq \hat{A}:=\begin{pmatrix}
			-3 & 5  \\
			1.2 & -6.6 
		\end{pmatrix},\ B(t)\preceq \hat{B}:=\begin{pmatrix}
			0.5 & 1.2  \\
			1.5 & 0.2 
		\end{pmatrix},\ \forall t\geq 0.
	\end{align*}
However, $\hat{A}+\hat{B}=\begin{pmatrix}
		-2.5 & 6.2  \\
		2.7 & -6.4 
	\end{pmatrix}$ is not a Hurwitz matrix because $\sigma(\hat{A}+\hat{B})=\{\lambda_1, \lambda_2\}$, here $\lambda_1\approx 0.0824$ and $\lambda_2\approx-8.9824$. Thus, one cannot apply \cite[Theorem 4.5]{Thinh-Tuan} to this case.
\end{remark}
\begin{example}
Consider the system
\begin{align}
^{\!C}D^{\alpha}_{0+}x(t)&=-a(t)x(t)+b(t)x(t-q(t)),\; t\in (0,\infty), \label{ex3}\\
y(s)&=\varphi(s),\ s\in[-\tau,0], \label{ex3-cond}
	\end{align}
\end{example}
where $\alpha=0.65$,
$a(t)=0.2+0.002t,\ b(t)=-0.02\sqrt{t},\;q(t)=1+\displaystyle\frac{1}{2+\sin t}$ for $t\geq 0.$
Taking $\gamma(t)=0.3,\ \sigma(t)=0.2$ for all $t\ge0$, then the condition \eqref{con-theorem2} holds. Moreover,
 \begin{equation*} 
	\begin{pmatrix}
		-2a(t)+\gamma(t)& b(t) \\
		b(t) & -\sigma(t)
	\end{pmatrix}=\begin{pmatrix}
		-0.1-0.004t& -0.02\sqrt{t} \\
		-0.02\sqrt{t} & -0.2
	\end{pmatrix}<0,\ \forall t\geq 0,
\end{equation*}
and thus the condition \eqref{LMI} is also true. Using Theorem \ref{Theorem-LMI}, it shows that the solution $\Phi(\cdot,\varphi)$ converges to the origin for any $\varphi\in C([-2,0];\R)$. Furthermore, by a simple computation, for $\lambda=0.05$, we see
\begin{align*}
    \lambda-\gamma(t)+\frac{\sigma(t)}{E_{\alpha}(-\lambda q^\alpha(t))}&=-0.25+\frac{0.2}{E_{0.65}(-0.05 q^{0.65}(t))}\\
    &\leq -0.25+\frac{0.2}{E_{0.65}(-0.05 \times 2^{0.65})}\\
    &\approx-0.25+\frac{0.2}{0.9179} <0,\ \forall t\ge0.
\end{align*}
Hence, the following estimate is true
$$|\Phi(t,\varphi)|\leq \sqrt{\sup_{s\in [-2,0]}|\varphi(s)|^2}\sqrt{E_{0.65}(-0.05 t^{0.65})},\;\forall t\geq 0.$$
Figure 3 depicts the orbits of the solution of the system \eqref{ex3} with the initial condition $\varphi(s)=0.3-0.5\cos (2s)$ on $[-2,0]$.
	\begin{figure}
		\begin{center}
			\includegraphics[scale=.7]{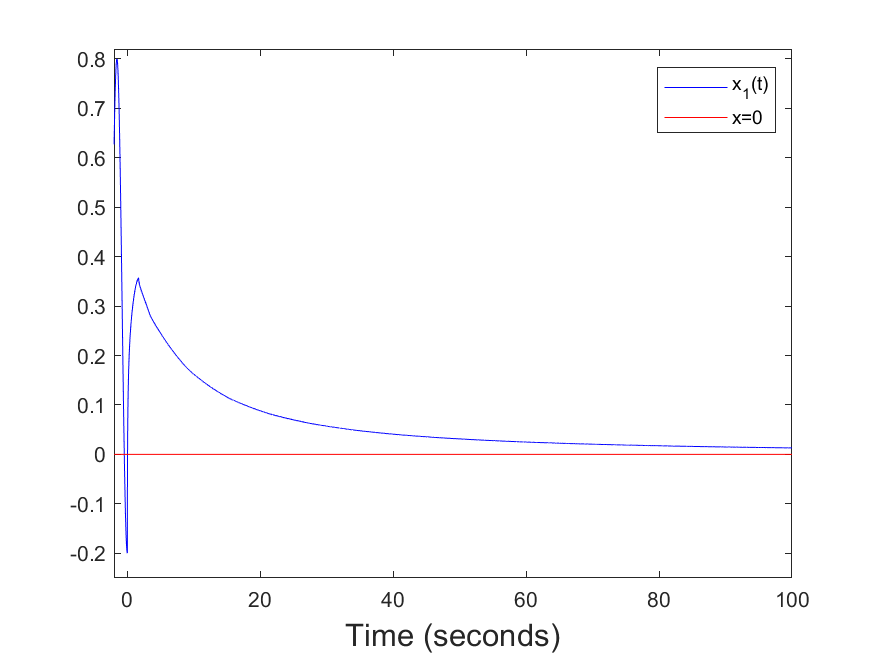}
		\end{center}
		\begin{center}
		\caption{Orbits of the solution of the system \eqref{ex3} with the initial condition $\varphi(s)=0.3-0.5\cos (2s)$ on $[-2,0]$.}
		\end{center}
	\end{figure}

 \end{document}